\numberwithin{equation}{section}
\newtheorem{theorem}{Theorem}[section]
\newtheorem{lemma}[theorem]{Lemma}
\newtheorem{proposition}[theorem]{Proposition}
\newtheorem{definition}{Definition}[section]
\newtheorem{corollary}[theorem]{Corollary}
\newtheorem{remark}[theorem]{Remark}
\newcommand{\cl}[1]{\mathcal{#1}} 
\newcommand{\bb}[1]{\mathbb{#1}}
\newcommand{\sca}[1]{\left\langle#1\right\rangle} 
\newcommand{\nor}[1]{\left\Vert #1\right\Vert}
\begin{document}

\title{HILBERT MODULES, RIGGED MODULES AND STABLE ISOMORPHISM}

\author[G. K. Eleftherakis and E. Papapetros ]{G. K. Eleftherakis and E. Papapetros }

\address{G. K. Eleftherakis\\ University of Patras\\Faculty of Sciences\\ Department of Mathematics\\265 00 Patras, Greece }
\email{gelefth@math.upatras.gr} 

\address{E. Papapetros\\ University of Patras\\Faculty of Sciences\\ Department of Mathematics\\265 00 Patras, Greece }

\email{e.papapetros@upatras.gr} 

\thanks{2020 {\it Mathematics Subject Classification.} 47L30 (primary), 46L05, 46L08, 47L05, 47L25,  
16D90 (secondary)} 

\keywords{Operator algebras, $C^*$-algebras, Hilbert modules, TRO, Stable isomorphism, Morita equivalence}

\begin{abstract}  
Rigged modules over an operator algebra are a generalization of Hilbert modules over a $C^{\star}$-algebra. We characterize the rigged modules over an operator algebra $\cl A$ which are orthogonally complemented in $C_\infty(\cl A),$  the space of infinite columns with entries in $\cl A.$ We show that every such rigged module `restricts' to a bimodule of Morita equivalence between appropriate stably isomorphic operator algebras. 
\end{abstract}

\date{}

\maketitle

\section{Introduction}

Let $X, Y$ be operator spaces. We call them stably isomorphic if the spatial tensor products $X\otimes
\cl K, Y\otimes\cl K$  are completely isometrically isomorphic, where $\cl K$ is the algebra of compact operators acting on an infinite dimensional Hilbert space. We also denote by $C_\infty(X)$ the operator space of infinite columns with entries in $X.$ In case $X$  is a right rigged module over an operator algebra $\cl A,$ so is $C_\infty(X).$

The notion of a Hilbert $C^{\star}$-module was developed in the early 1970s by Paschke and Rieffel, see \cite{Rie82,Paschke}. A Hilbert module over a $C^*$-algebra $\cl A$ is a right $\cl A$-module $Y$ together with a map $\sca{\cdot ,\cdot }:  Y\times  Y\rightarrow \cl A$ which is linear in the second variable, and which also satisfies the following conditions:

(1) $\sca{y,y}\geq 0$ for all $y\in  Y,$

(2) $\sca{y,y}= 0\Leftrightarrow y=0,$

(3) $\sca{y,za}=\sca{y,z}a, $ for all $y, z \in  Y, a \in \cl A,$

(4) $\sca{y,z}^*=\sca{z,y}$ for all $y, z \in  Y,$

(5) $ Y$ is complete in the norm $\|y\|=\|\sca{y,y}\|^{\frac{1}{2}}.$

Observe that the space $I_{\cl A}( Y)$, which is the closure of the linear span of the set $\{\sca{y,z},  y, z \in  Y\}$, is an ideal of $\cl A.$  

Consider the  $C^*$-algebra $\bb K_{\cl A}( Y)$ of the `compact' adjointable operators from $ Y$ 
to $ Y.$ It is known that $ Y$ is a bimodule of Morita equivalence  between $I_{\cl A}( Y)$ and $\bb K_{\cl A}( Y).$  But these $C^*$-algebras are not always stably isomorphic.

Let $Y$ be a right Hilbert $\cl A$-module. In case there exists a sequence $(y_k)\subseteq Y$ 
such that 
$$y=\sum_ky_k\sca{y_k,y}, \,\,\forall\,y\,\in \,Y$$
where the series converges in the norm of $Y$, we say that $(y_k)$ is a right quasibasis for $Y.$  It follows by the Brown--Kasparov stabilization theorem, see Corollary 8.20 in \cite{BleLeM04}, that the spaces $I_{\cl A}( Y)$, 
$\bb K_{\cl A}( Y), Y$ are all stably isomorphic. 

Let $Y$ be a right Hilbert $\cl A$-module. We call it countably generated if there exists a sequence $(y_k)\subseteq Y$ such that $$Y=\overline{span}\{y_ka: k\in \bb N, a\in \cl A\}.$$
 If $Y$ has a right quasibasis, then $Y$ is countably generated, and conversely. Every countably generated Hilbert  $\cl A$-module is isomorphic as a Hilbert $\cl A$-module with an orthogonally complemented bimodule of $C_\infty(\cl A).$
 
 Blecher in \cite{Ble-Gen} generalized the notion of Hilbert modules to the setting of non-selfadjoint operator algebras. He called these modules rigged modules, see the definition below. Hilbert modules over a $C^*$-algebra are rigged modules in terms of this definition. Using the notion of a ternary ring of operators, we introduce a new category of $\cl A-$rigged modules, where $\cl A$ is an operator algebra, the $\sigma\Delta-\cl A-$rigged modules. We prove that an  $\cl A-$rigged module is a $\sigma\Delta-\cl A-$rigged module if and only if it is isomorphic with an  orthogonally complemented module in $C_\infty(\cl A).$ We also introduce a subcategory of the $\sigma\Delta-\cl A-$rigged modules, the doubly $\sigma\Delta-\cl A-$ rigged modules. In the case of $C^*$-algebras these two categories coincide. 
 Every doubly $\sigma\Delta-\cl A-$rigged module implements a stable isomorphism between the corresponding operator algebras. Conversely, if $\cl A$ and $\cl B$ are stably isomorphic operator algebras, there exists a doubly $\sigma\Delta-\cl A-$rigged module $Y$ which is a bimodule of strong Morita equivalence (BMP-Morita equivalence) for $\cl A$ and $\cl B$  in the sense of Blecher, Muhly and Paulsen, \cite{BMP00}.
 Every $\sigma\Delta-\cl A-$rigged module has a `restriction' which is a doubly $\sigma\Delta-\cl A-$rigged module. Thus every orthogonally complemented rigged module  in $C_\infty(\cl A),$ has a  `restriction' making it into a bimodule of BMP-Morita equivalence over some operator algebras $\cl C,\cl D.$ Furthermore, $\cl C$ and $\cl D$ are stably isomorphic. 
 
In Section \ref{444}, we will develop a theory of Morita equivalence for rigged modules.
If $\cl A\,,\cl B$ are operator algebras,  $\,E$ is a right $\cl B$-rigged module, and $F$ is a right $\cl A$-rigged module, we call $E$ and $F$ $\sigma$-Morita equivalent if there exists a doubly $\sigma \Delta$-$\cl A$-rigged module $ Y$ such that $\cl A\cong \tilde{ Y}\otimes_{\cl B}^h  Y\,,\cl B\cong  Y\otimes_{\cl A}^h \tilde{ Y}$ and also $F\cong E\otimes_{\cl B}^h  Y,$  where $\tilde{ Y}$ is the counterpart bimodule of $Y.$ In this case we write $E\sim_{\sigma M} F.$ We will prove that if $E\sim_{\sigma M} F$, then $E$ and $F$ are stably isomorphic. 

This paper has been written with an emphasis on the theory of non-selfadjoint operator algebras, but the conclusions for $C^*$-algebras follow easily. 

At this point, we recall some definitions, notation and lemmas which will be useful for what follows. 

\begin{definition}, \cite{Ble-Gen}
\label{rigged}

Let $\cl A$ be an approximately unital operator algebra, i.e. an operator algebra with a contractive approximate identity,  and let $Y$ be a right $\cl A$-operator module.
Suppose there is a net $(n(b))_{b\in B}$ of positive integers and right $\cl A$-module maps $$\Phi_{b}:Y\to C_{n(b)}(\cl A)\,\,,\Psi_{b}:C_{n(b)}(\cl A)\to Y\,,b\in B$$ such that:\\
i) the maps $\Phi_{b}\,,\Psi_{b}$ are completely contractive;\\
ii) $\,\Psi_{b}\circ \Phi_{b}\to Id_{Y}$ strongly on $Y$;\\
iii) the maps $\Psi_{b}\,,b\in B$ are right $\cl A$-essential maps (that is, $\Psi_{b}\,e_i\to \Psi_{b}$ for a bounded approximate identity $(e_i)_{i\in I}$ of $\cl A$);\\
iv)$\,\Phi_{c}\circ \Psi_{b}\circ \Phi_{b}\to \Phi_{c}\,,\forall\,c\in B$ (uniformly in norm).\\
Then we say that $Y$ is a right $\cl A$-rigged module.

\end{definition}

 We denote by $\mathbb{B}(H,K)$ the space of all linear and bounded operators from the Hilbert space $H$ to the Hilbert space $K$.
If $H=K$, we write $\mathbb{B}(H,H)=\mathbb{B}(H)$.
If $ X$ is a subset of $\mathbb{B}(H,K)$ and $ Y$ is a subset of $\mathbb{B}(K,L)$, then we denote by $\overline{[ Y\, X]}$ the norm-closure of the linear span of the set $$\left\{y\,x\in\mathbb{B}(H,L)\,,y\in Y\,,x\in X\right\}.$$ Similarly, if $ Z$ is a subset of $\mathbb{B}(L,R)$, we define the space $\overline{[ Z\,Y\, X]}$.

\begin{definition}

$i)\,$ A linear subspace $M\subseteq \mathbb{B}(H,K)$ is called a ternary ring of operators (TRO) if $M\,M^{\star}\,M\subseteq M$.\\
$ii)\,$ A norm closed ternary ring of operators $M$ is called a $\sigma$-TRO if there exist sequences $\left\{m_i\in M\,,i\in\mathbb{N}\right\}$ and $\left\{n_j\in M\,,j\in\mathbb{N}\right\}$ such that $$\lim_{n}\sum_{i=1}^n m_i\,m_i^{\star}\,m=m\,\,,\lim_{t}\sum_{j=1}^t m\,n_j^{\star}\,n_j=m\,,\forall\,m\in M$$ and $$\nor{\sum_{i=1}^n m_i\,m_i^{\star}}\leq 1\,,\nor{\sum_{j=1}^{t}n_j^{\star}\,n_j}\leq 1\,,\forall\,n\,,t\in\mathbb{N}.$$

\end{definition}
A norm closed TRO $M$ is a $\sigma$-TRO if and only if the $C^{\star}$-algebras $\overline{[M^{\star}\,M]}$ and $\overline{[M\,M^{\star}]}$ have $\sigma$-units, \cite{Bro77}.

If $ X$ is an operator space, then the spatial tensor product $ X\otimes \cl K$ is completely isometrically isomorphic with the space $ K_{\infty}( X)$, which is the norm closure of the finitely supported matrices in $\mathbb{M}_{\infty}( X)$.
Here, $\mathbb{M}_{\infty}( X)$ is the space of $\infty\times \infty$ matrices with entries in $X$ which define bounded operators.
Also, for $ Y$ another operator space, we denote by $ X\otimes^h  Y$ the Haagerup tensor product of $ X$ and $ Y$.
If $\cl A$ is an operator algebra, $ X$ is a right $\cl A$-module, and $ Y$ is a left $\cl A$-module, then we denote by $ X\otimes_{\cl A}^h  Y$ the balanced Haagerup tensor product of $ X$ and $ Y$ over $\cl A$, see \cite{BMP00}.
We now give two basic definitions.

\begin{definition}

Let $ X\subseteq \mathbb{B}(H,K)\,, Y\subseteq \mathbb{B}(L,R)$ be operator spaces.
We call them $\sigma$-TRO equivalent if there exist $\sigma$-TROs $M_1\subseteq \mathbb{B}(H,L)\,,M_2\subseteq \mathbb{B}(K,R)$ such that $$ X=\overline{[M_2^{\star}\, Y\,M_1]}\,\,, Y=\overline{[M_2\, X\,M_1^{\star}]}.$$ In this case we write $ X\sim_{\sigma TRO} Y$.

\end{definition}

\begin{definition}
\label{eq}

Let $ X, Y$ be operator spaces.
We call them $\sigma \Delta$ equivalent if there exist completely isometric maps $\phi: X\to \mathbb{B}(H,K)\,,\psi: Y\to \mathbb{B}(L,R)$ such that $\phi(X)\sim_{\sigma TRO}\psi( Y)$, and we then write $ X\sim_{\sigma \Delta} Y$.

\end{definition}

If $\cl A\,,\cl B$ are abstract or concrete operator algebras, we say that they are $\sigma \Delta$ equivalent and we write $\cl A\sim_{\sigma \Delta}\cl B$ if there exist completely isometric representations $a:\cl A\to a(\cl A)\subseteq \mathbb{B}(H)\,,\beta:\cl B\to \beta(\cl B)\subseteq \mathbb{B}(K)$ and a $\sigma$-TRO $M\subseteq \mathbb{B}(H,K)$ such that $$a(\cl A)=\overline{[M^{\star}\,\beta(\cl B)\,M]}\,\,,\beta(\cl B)=\overline{[M\,a(\cl A)\,M^{\star}]}.$$

For further details about the notion of $\sigma \Delta$ equivalence of operator algebras and operator spaces, we refer the reader to \cite{Ele14, Elest, Elekak, Ele-Pap}.
If $ X\,, Y$ are operator spaces, then $ X\sim_{\sigma \Delta} Y$ if and only if $ X$ and $ Y$ are stably isomorphic, that is, $ K_{\infty}( X)\cong  K_{\infty}( Y)$ (similarly for operator algebras).
We now present a lemma  which will be used in some of the proofs in the following sections.

\begin{lemma}
\label{a lot}

Suppose that $\cl A\,,\cl B$ are operator algebras and $D\subseteq \cl B$ is a $C^{\star}$-algebra such that $\overline{[D\,\cl B]}=\overline{[\cl B\,D]}=\cl B$.
Let $M\subseteq \mathbb{B}(H,K)$ be a $\sigma$-TRO such that $\overline{[M^{\star}\,M]}\cong D$ (as $C^{\star}$-algebras) and assume that $\cl A\cong M\otimes_{D}^h \cl B\otimes_{ D}^h M^{\star}$.
Then, $\cl A\sim_{\sigma \Delta}\cl B.$

\end{lemma}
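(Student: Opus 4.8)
The plan is to construct an explicit $\sigma$-TRO $N\subseteq\mathbb B(H,K)$ witnessing $\cl A\sim_{\sigma\Delta}\cl B$, namely (a completely isometric copy of) $N=\overline{[\cl A\,M^{\star}]}=\overline{[M^{\star}\,\cl B]}$, and then verify the two defining identities
\[
\cl A=\overline{[N^{\star}\,\cl B\,N]},\qquad \cl B=\overline{[N\,\cl A\,N^{\star}]}.
\]
First I would fix a completely isometric representation of $\cl B$ on $K$ so that $D\subseteq\cl B\subseteq\mathbb B(K)$ is a nondegenerately acting $C^\star$-subalgebra with $\overline{[D\,\cl B]}=\overline{[\cl B\,D]}=\cl B$, and realize $M\subseteq\mathbb B(H,K)$ concretely with $\overline{[M^\star M]}=D$; since $M$ is a $\sigma$-TRO, $\overline{[M^\star M]}$ and $\overline{[MM^\star]}$ have $\sigma$-units, and the sequences $(m_i),(n_j)$ from the definition give approximate identities for these $C^\star$-algebras. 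The hypothesis $\cl A\cong M\otimes_D^h\cl B\otimes_D^h M^\star$ is then used to transport the operator-algebra structure: one identifies $M\otimes_D^h\cl B\otimes_D^h M^\star$ completely isometrically with the concrete space $\overline{[M\,\cl B\,M^\star]}\subseteq\mathbb B(H)$, using that the balanced Haagerup tensor product over a $C^\star$-algebra acting nondegenerately coincides with the module Haagerup tensor product and, for TROs/$C^\star$-algebras sitting concretely, with the norm-closed linear span of the product (this is the standard identification of Haagerup tensor products of concrete operator spaces with their product spans; see \cite{BMP00}, and it is exactly the mechanism used throughout the $\sigma\Delta$ literature \cite{Ele14,Elekak}). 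This gives a completely isometric homomorphism $\cl A\hookrightarrow\mathbb B(H)$ with image $\overline{[M\,\cl B\,M^\star]}$.

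Next I would set $N=\overline{[M^\star\,\cl B]}\subseteq\mathbb B(H,K)$ and check it is a $\sigma$-TRO. That $N$ is a TRO is a short computation: $N\,N^\star\,N\subseteq\overline{[M^\star\,\cl B\,\cl B\,M\,M^\star\,\cl B]}\subseteq\overline{[M^\star\,\cl B\,D\,\cl B]}\subseteq\overline{[M^\star\,\cl B]}=N$, using $\overline{[M\,M^\star]}\subseteq D$ (as $\overline{[M^\star M]}=D$ forces $M$ to be a $D$-$\overline{[MM^\star]}$ imprimitivity-type TRO, but in fact only $\overline{[M M^\star]}\subseteq \mathbb B(H)$ is needed together with $M^\star\cl B M\subseteq\ M^\star M\cdot\,$something — more carefully, $\overline{[M^\star\cl B\cl B M]}\subseteq\overline{[M^\star\cl B M]}$ and $M^\star\cl B M\subseteq\overline{[M^\star M\,\mathcal B\,M^\star M]}$, which lands in $D\cl B D\subseteq\cl B$ under the identification above, hence back in $\overline{[M^\star M]}$, giving $N N^\star N\subseteq N$). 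For the $\sigma$-TRO property I would exhibit the required summable families: from the $\sigma$-unit $(m_i)$ of $\overline{[MM^\star]}$ one gets $\sum_i m_i m_i^\star\to$ identity strongly with $\|\sum_{i=1}^n m_i m_i^\star\|\le 1$, which serves for the left family of $N$ after noting $N^\star$ involves $\cl B M$; for the right family I would use a $\sigma$-unit of $D\cong\overline{[M^\star M]}$ together with the nondegeneracy $\overline{[\cl B D]}=\cl B$ to produce a countable family $(n_j)$ in $N$ with $\sum_j m\,n_j^\star n_j\to m$ and uniformly bounded partial sums — this is where the countable-generation hypotheses on $D$ (via $\sigma$-unit) and the $\sigma$-TRO property of $M$ are both essential.

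Finally I would verify the two $\sigma\Delta$ identities. For $\overline{[N\,\cl A\,N^\star]}$: using $\cl A=\overline{[M\,\cl B\,M^\star]}$ and $N=\overline{[M^\star\cl B]}$, we get $\overline{[N\,\cl A\,N^\star]}=\overline{[M^\star\,\cl B\,M\,\cl B\,M^\star\,M\,\cl B]}$, and collapsing the $M^\star M$'s and $M M^\star$'s against $D$ and using $\overline{[D\cl B]}=\overline{[\cl B D]}=\cl B$ repeatedly reduces this to $\overline{[\cl B]}=\cl B$; one inclusion is immediate and the reverse uses the approximate identities. For $\overline{[N^\star\,\cl B\,N]}$: here $N^\star=\overline{[\cl B M]}$, so $\overline{[N^\star\cl B N]}=\overline{[\cl B\,M\,\cl B\,M^\star\,\cl B]}$; recognizing $\overline{[M\,\cl B\,M^\star]}=\cl A$ this becomes $\overline{[\cl B\,\cl A\,\cl B]}$, and I would identify this with $\cl A$ by the same collapsing, noting that under the identification $\cl A=\overline{[M\cl B M^\star]}$ multiplication by $\cl B$ on either side is absorbed because $\cl B M=\overline{[\cl B M^\star M\,M]}\subseteq\overline{[D\,M]}$-type manipulations let $\cl B$ be pushed into the $D$-balancing. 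The main obstacle I anticipate is precisely this bookkeeping of how left/right multiplication by $\cl B$ interacts with the $D$-balanced Haagerup tensor product — i.e. making rigorous the identification $M\otimes_D^h\cl B\otimes_D^h M^\star\cong\overline{[M\cl B M^\star]}$ completely isometrically and checking it is an algebra isomorphism — since once the concrete picture is in place the $\sigma$-TRO verification and the two module identities are routine closed-span computations of the kind standard in \cite{Ele14,Elekak,Ele-Pap}.
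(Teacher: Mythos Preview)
Your proposed $\sigma$-TRO $N=\overline{[M^{\star}\cl B]}$ does not work, and the error is structural rather than cosmetic. In the lemma $\cl B$ is an \emph{operator algebra}, not a $C^{\star}$-algebra, so $\cl B^{\star}\neq\cl B$ in general. Hence $N^{\star}=\overline{[\cl B^{\star}M]}$, not $\overline{[\cl B\,M]}$ as you write, and your computation
\[
N\,N^{\star}\,N\subseteq\overline{[M^{\star}\,\cl B\,\cl B\,M\,M^{\star}\,\cl B]}
\]
is already wrong at the first step: the middle factor is $\cl B\,\cl B^{\star}$, and there is no hypothesis letting you absorb $\cl B^{\star}$ into anything. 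The same implicit self-adjointness assumption infects your final ``collapsing'' arguments for $\overline{[N\cl A N^{\star}]}$ and $\overline{[N^{\star}\cl B N]}$. There is also a persistent orientation mix-up: you place $\cl B$ on $K$ while asserting $\overline{[M^{\star}M]}=D$, but $M^{\star}M\in\mathbb B(H)$; and later you write ``$\overline{[MM^{\star}]}\subseteq D$'', confusing $MM^{\star}$ with $M^{\star}M$.

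The paper does not reprove this lemma---it cites \cite[Lemma~2.2]{Ele-Pap}---but the standard argument uses $M$ \emph{itself} as the $\sigma$-TRO, not $\overline{[M^{\star}\cl B]}$. The genuine work is the representation step you glossed over: one fixes a faithful representation $\beta:\cl B\to\mathbb B(L)$, restricts to $D$, and uses the TRO $M$ (equivalently, the linking $C^{\star}$-algebra or Rieffel induction) to produce a Hilbert space $L'$ and a ternary isomorphic copy $M'\subseteq\mathbb B(L,L')$ with $\overline{[(M')^{\star}M']}=\beta(D)$. Then $M\otimes_D^h\cl B\otimes_D^h M^{\star}\cong\overline{[M'\,\beta(\cl B)\,(M')^{\star}]}$ gives a completely isometric homomorphism $\alpha:\cl A\to\mathbb B(L')$, and the two $\sigma$-TRO identities are immediate:
\[
\overline{[M'\,\beta(\cl B)\,(M')^{\star}]}=\alpha(\cl A),\qquad \overline{[(M')^{\star}\,\alpha(\cl A)\,M']}=\overline{[\beta(D)\,\beta(\cl B)\,\beta(D)]}=\beta(\cl B),
\]
the last equality using exactly the hypothesis $\overline{[D\cl B]}=\overline{[\cl B D]}=\cl B$. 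No self-adjointness of $\cl B$ is needed anywhere, because the TRO is $M'$ and not a product involving $\cl B$.
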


A proof of this lemma can be found in \cite[Lemma 2.2]{Ele-Pap}.

\section{Orthogonally complemented modules and $\sigma\Delta$ rigged  Modules}\label{222}

Let $\cl A$ be an approximately unital operator algebra and $P:C_{\infty}(\cl A)\to C_{\infty}(\cl A)$ be a left multiplier of $C_{\infty}(\cl A)$ (that is, $P\in M_{\ell}(C_{\infty}(\cl A)))$ such that $P$ is contractive and $P^2=P$.
Then the space $W=P(C_{\infty}(\cl A))$ is said to be orthogonally complemented in $C_{\infty}(\cl A).$
In this section we characterize the orthogonally complemented modules in the the terms of ternary rings of operators. A dual version of the results obtained here is in \cite{Ble-Kr}. 

\begin{definition}
\label{basic-def}

Let $\cl A\subseteq \mathbb{B}(H)$ be an approximately unital operator algebra and $M\subseteq \mathbb{B}(H,K)$ be a $\sigma$-TRO such that $M^{\star}\,M\,\cl A\subseteq \cl A.$ The operator space $Y_0=\overline{[M\,\cl A]}\subseteq \mathbb{B}(H,K)$ is called a $\,\sigma$-TRO-$\cl A$-rigged module.

\end{definition}

We recall that $Y_0$ is a right $\cl A$-operator module with action $$(m\,a)\cdot x=m(a\,x)\,,m\in M\,,a\,,x\in\cl A.$$

\begin{definition}
\label{abstract-rigged}
Let $\cl A$ be an abstract  approximately unital operator algebra and let $Y$ be an abstract right $\cl A$-module.
We call $\,Y$ a $\,\sigma \Delta$-$\cl A$-rigged module if there exists a completely isometric homomorphism $a:\cl A\to a(\cl A)$ and there exist a $\,\sigma$-TRO-$a(\cl A)$-rigged module $Y_0$ and a complete surjective isometry $\rho:Y\to Y_0$ which is also a right $\cl A$-module map. In case $\cl A$ 
is a $C^*-$algebra we call $Y$ a $\,\sigma \Delta$-$\cl A$-Hilbert module.
\end{definition}

\begin{proposition}
\label{basic-rigged}

Let $\cl A$ be an approximately unital operator algebra.
Every $\sigma \Delta$-$\cl A$-rigged module is a right rigged module over $\cl A$ in the sense of Definition \ref{rigged}.
 
\end{proposition}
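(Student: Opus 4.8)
The plan is to reduce to the concrete case: since a $\sigma\Delta$-$\cl A$-rigged module $Y$ is, by Definition \ref{abstract-rigged}, completely isometrically and $\cl A$-equivariantly isomorphic to a $\sigma$-TRO-$a(\cl A)$-rigged module $Y_0=\overline{[M\,a(\cl A)]}$ for some $\sigma$-TRO $M\subseteq\mathbb B(H,K)$ with $M^\star M\,a(\cl A)\subseteq a(\cl A)$, it suffices to exhibit the rigging net of Definition \ref{rigged} for $Y_0$ over $a(\cl A)$; transporting it back along $\rho$ and $a$ gives the rigging for $Y$ over $\cl A$. So I would work throughout with $Y_0$, writing $\cl A$ for $a(\cl A)$.

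The key step is to build the maps $\Phi_b,\Psi_b$ from the $\sigma$-TRO structure. Fix sequences $(m_i)\subseteq M$ and $(n_j)\subseteq M$ witnessing that $M$ is a $\sigma$-TRO, so that $\sum_{i=1}^n m_i m_i^\star m\to m$ and $\sum_{j=1}^t m\,n_j^\star n_j\to m$ for all $m\in M$, with $\|\sum_{i=1}^n m_i m_i^\star\|\le 1$ and $\|\sum_{j=1}^t n_j^\star n_j\|\le 1$. Index the net by $b=n\in\mathbb N$ with $n(b)=n$, and define
\[
\Phi_n:Y_0\to C_n(\cl A),\qquad \Phi_n(y)=\bigl(m_1^\star y,\,m_2^\star y,\dots,m_n^\star y\bigr)^{t},
\]
\[
\Psi_n:C_n(\cl A)\to Y_0,\qquad \Psi_n\bigl((a_i)_{i=1}^n\bigr)=\sum_{i=1}^n m_i a_i .
\]
One must first check these land in the right spaces: $m_i^\star y\in \overline{[M^\star M\,\cl A]}\subseteq\cl A$ since $M^\star M\,\cl A\subseteq\cl A$, and $m_i a_i\in\overline{[M\,\cl A]}=Y_0$. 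Complete contractivity of $\Phi_n$ follows from $\|\Phi_n(y)\|^2=\|\sum_{i=1}^n y^\star m_i m_i^\star y\|\le\|y\|^2\,\|\sum_{i=1}^n m_i m_i^\star\|\le\|y\|^2$, with the matricial version handled the same way over $\mathbb M_k(Y_0)$; complete contractivity of $\Psi_n$ is the dual computation using $\|\sum n_j^\star n_j\|\le 1$ — actually for $\Psi_n$ one uses the column estimate $\|\sum_{i=1}^n m_i a_i\|\le\|(m_i)_{i=1}^n\|_{\mathrm{row}}\,\|(a_i)_{i=1}^n\|_{\mathrm{col}}$ together with $\|(m_i)_{i=1}^n\|_{\mathrm{row}}^2=\|\sum m_i m_i^\star\|\le 1$. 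Both are right $\cl A$-module maps by construction. For (ii), $\Psi_n\Phi_n(y)=\sum_{i=1}^n m_i m_i^\star y$; since $y\in\overline{[M\,\cl A]}$, approximate $y$ by $\sum_k m^{(k)}a^{(k)}$ and use $\sum_{i=1}^n m_i m_i^\star m\to m$ for each $m\in M$ plus the uniform bound to get $\Psi_n\Phi_n(y)\to y$ in norm (hence strongly) — here I would be careful that the convergence $\sum m_i m_i^\star m\to m$ is only guaranteed pointwise on $M$, so the uniform norm bound is exactly what upgrades it to all of $Y_0$. For (iii), essentiality of $\Psi_n$: with $(e_\lambda)$ a cai of $\cl A$, $\Psi_n((a_i)e_\lambda)=\sum m_i a_i e_\lambda\to\sum m_i a_i=\Psi_n((a_i))$, which is immediate. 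For (iv), $\Phi_c\Psi_n\Phi_n(y)=\Phi_c\bigl(\sum_{i=1}^n m_i m_i^\star y\bigr)=\bigl(\sum_{i=1}^n m_\ell^\star m_i m_i^\star y\bigr)_{\ell=1}^{c}$, and this converges to $\Phi_c(y)=(m_\ell^\star y)_{\ell=1}^c$ uniformly in norm because for each fixed $\ell$, $m_\ell^\star\sum_{i=1}^n m_i m_i^\star\to m_\ell^\star$ in norm by applying $\sum_i m_i m_i^\star m\to m$ to $m=m_\ell$ and taking adjoints — actually I need the $\sigma$-TRO relation in the form $\sum_i m_i m_i^\star m\to m$, so $(\sum_i m_i m_i^\star)m_\ell\to m_\ell$; taking $\star$ gives $m_\ell^\star\sum_i m_i m_i^\star\to m_\ell^\star$ since $\sum m_i m_i^\star$ is self-adjoint. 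As $c$ is finite this is uniform over the $c$ coordinates, giving (iv).

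The main obstacle I anticipate is (ii), upgrading the defining pointwise-on-$M$ convergence to strong (indeed norm) convergence on the closed span $Y_0=\overline{[M\,\cl A]}$: the naive estimate on $\|\Psi_n\Phi_n(y)-y\|$ for a general $y$ does not telescope, so one genuinely needs the $\eps/3$ argument splitting $y=y'+(y-y')$ with $y'$ a finite sum $\sum_k m^{(k)}a^{(k)}$, controlling $\|\Psi_n\Phi_n(y-y')\|\le\|y-y'\|$ via the established contractivity, then handling the finite-sum part coordinatewise. A secondary technical point is verifying that $\Psi_n$ is genuinely completely contractive as a map into the operator module $Y_0$ (not merely into $\mathbb B(H,K)$), but since $Y_0$ carries the inherited operator space structure this is automatic from the row/column factorization. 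Once the rigging net $(\Phi_n,\Psi_n)$ is verified for $Y_0$, pulling it back by setting $\tilde\Phi_n=\Phi_n\circ\rho$ and $\tilde\Psi_n=\rho^{-1}\circ\Psi_n$ (and reading $C_n(\cl A)=C_n(a(\cl A))$ via $a$) completes the proof, since $\rho$ is a complete surjective isometry and a right $\cl A$-module map, so it preserves all four conditions.
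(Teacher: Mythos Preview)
Your proposal is correct and follows essentially the same approach as the paper: reduce via $\rho$ to the concrete $\sigma$-TRO case $Y_0=\overline{[M\,a(\cl A)]}$, define $\Phi_n(y)=(m_i^\star y)_{i=1}^n$ and $\Psi_n((a_i))=\sum_{i=1}^n m_i a_i$ using the $\sigma$-TRO sequence $(m_i)$, and verify conditions (i)--(iv) by the same computations. Your $\eps/3$ argument for (ii) is in fact more careful than the paper, which simply asserts that $\sum_i m_i m_i^\star y=y$ for all $y\in Y_0$ from the corresponding identity on $M$; and your justification of (iv) via $m_\ell^\star\!\sum_i m_i m_i^\star\to m_\ell^\star$ matches the paper's bound $\|\Phi_r\Psi_n\Phi_n-\Phi_r\|\le\bigl\|(m_\ell^\star s_n-m_\ell^\star)_{\ell=1}^r\bigr\|$ with $s_n=\sum_{i=1}^n m_i m_i^\star$.
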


\begin{proof}
Let $Y$ be a right $\sigma \Delta$-$\cl A$-rigged module.
Then there exist a completely isometric homomorphism $a:\cl A\to a(\cl A)\subseteq \mathbb{B}(H)$, a $\,\sigma$-TRO $M\subseteq \mathbb{B}(H,K)$, and a complete surjective isometry $\rho:Y\to Y_0=\overline{[M\,a(\cl A)]}$ which is also a right $\cl A$-module map.
So, if we choose a $\left\{\Phi_{b}\,,\Psi_{b}: b\in B\right\}$ for the module $Y_0$, then we define for each $b\in B$ a map $\Phi_{b}'=\Phi_{b}\circ \rho\,\,,\Psi_{b}'=\rho^{-1}\circ \Psi_{b}$ and we can see that the $\left\{\Phi_{b}'\,,\Psi_{b}': b\in B\right\}$ satisfy the conditions of Definition \ref{rigged}.
So, $Y$ becomes a right $\cl A$-rigged module.
Therefore it suffices to prove the proposition when $Y=\overline{[M\,a(\cl A)]}\subseteq \mathbb{B}(H,K)$.
Since $M$ is a $\sigma$-TRO, there exists a sequence $\left\{m_i\in M: i\in\mathbb{N}\right\}$ such that $\,||(m_i)_{i\in\mathbb{N}}||\leq 1$ and $$\sum_{i=1}^{\infty}m_i\,m_i^{\star}\,m=m\,,\forall\,m\in M.$$ Since $Y=\overline{[M\,a(\cl A)]}$, it follows that $$\sum_{i=1}^{\infty}m_i\,m_i^{\star}\,y=y\,,\forall\,y\in Y.$$ For $n\in\mathbb{N}$ we define $$\Phi_{n}:Y\to C_{n}(\cl A)\,,\Phi_{n}(y)=\begin{pmatrix}m_{1}^{\star}\,y\\
...\\
m_{n}^{\star}\,y\end{pmatrix},$$ which is linear and a completely contractive right $\cl A$-module map.
We also define the linear, completely contractive and right $\cl A$-module map $$\Psi_{n}:C_{n}(\cl A)\to Y\,,\Psi_{n}\left(\begin{pmatrix}a_1\\
...\\
a_{n}\end{pmatrix}\right)=\sum_{i=1}^{n}m_i\,a_i.$$

For all $y\in Y$, it holds that $$\Psi_{n}\circ \Phi_{n}(y)=\Psi_{n}\left(\begin{pmatrix}m_1^{\star}\,y\\
...\\
m_{n}^{\star}\,y\end{pmatrix}\right)=\sum_{i=1}^{n}m_i\,m_i^{\star}\,y\to y=Id_{Y}(y)$$ and we conclude that $\Psi_{n}\circ \Phi_{n}\to Id_{Y}$ strongly on $Y.$ The next step is to prove that $\Psi_{n}\,,n\in\mathbb{N}$, is a right $\cl A$-essential map.
To this end, let $(e_i)_{i\in I}$ be a contractive  approximate identity of $\cl A.$ We have that
\begin{align*}
    \nor{\Psi_{n}e_{i}\left(\begin{pmatrix}a_1\\
    ...\\
    a_{n}\end{pmatrix}\right)-\Psi_{n}\left(\begin{pmatrix} a_1\\
    ...\\
    a_{n}\end{pmatrix}\right)}&=\nor{\Psi_{n}\left(\begin{pmatrix}a_1\\
    ...\\
    a_{n}\end{pmatrix}\right)\,e_i-\Psi_{n}\left(\begin{pmatrix}a_1\\
    ...\\
   a_{n}\end{pmatrix}\right)}\\&=\nor{\sum_{j=1}^{n}(m_j\,a_j)\,e_i-\sum_{j=1}^{n}m_j\,a_j}\\&=\nor{\sum_{j=1}^{n}m_j\left(a_j\,e_i-a_j\right)}\\&\leq \sum_{j=1}^{n}||m_j||\,||a_j\,e_i-a_j||
\end{align*}
where $$\lim_{i}||a_j\,e_i-a_j||=0$$ for all $j=1,...,n$, so $$\lim_{i}\nor{\Psi_{n}e_{i}\left(\begin{pmatrix}a_1\\
    ...\\
    a_{n}\end{pmatrix}\right)-\Psi_{n}\left(\begin{pmatrix} a_1\\
    ...\\
    a_{n}\end{pmatrix}\right)}=0.$$
Finally, let $r\in\mathbb{N}$.
We shall show that $$\lim_{n}||\Phi_{r}\circ \Psi_{n}\circ \Phi_{n}-\Phi_{r}||=0.$$ 
We denote by $s_n$ the operators $$s_n=\sum_{i=1}^n m_i\,m_i^{\star}\,,n\in\mathbb{N}.$$

Hence, if $y\in Y$, we have that
\begin{align*}
    ||\Phi_{r}\circ \Psi_{n}\circ \Phi_{n}(y)-\Phi_{r}(y)||&=||\Phi_{r}\left(\Psi_{n}\circ \Phi_{n}(y)-y\right)||\\&=\nor{\begin{pmatrix}m_1^{\star}\,s_n-m_1^{\star}\\
    ...\\
    m_{r}^{\star}\,s_n-m_{r}^{\star}\end{pmatrix}\,y}\\&\leq \nor{\begin{pmatrix}m_1^{\star}\,s_n-m_1^{\star}\\
    ...\\
    m_{r}^{\star}\,s_n-m_{r}^{\star}\end{pmatrix}}\,||y||
\end{align*}
Therefore, $||\Phi_{r}\circ \Psi_{n}\circ \Phi_{n}-\Phi_{r}||\leq \nor{\begin{pmatrix}m_1^{\star}\,s_n-m_1^{\star}\\
    ...\\
    m_{r}^{\star}\,s_n-m_r^{\star}\end{pmatrix}}$

 Since    
 $$\lim_{n}||m_i^{\star}\,s_n-m_i^{\star}||=0\,,\forall\,i=1,...,r,$$ we have that 
    $$\lim_{n}||\Phi_{r}\circ \Psi_{n}\circ \Phi_{n}-\Phi_{r}||=0.$$
We conclude that $Y$ is a right $\cl A$-rigged module in the sense of Definition \ref{rigged}.
\end{proof}

\begin{theorem}
\label{orthog}

Let $\cl A$ be an approximately unital operator algebra and $Y$ be a right $\cl A$-operator module.
Then the following are equivalent:\\
$i)$\ $Y$ is a right $\sigma \Delta$-$\cl A$-rigged module.\\
$ii)$\ $Y$ is orthogonally complemented in $C_{\infty}(\cl A)$.

\end{theorem}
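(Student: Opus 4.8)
The plan is to prove both implications by unwinding the definitions and translating between the ``multiplier $P$'' picture of orthogonal complementation and the ``$\sigma$-TRO'' picture of Definition \ref{basic-def}.

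For the direction $(ii)\Rightarrow(i)$: suppose $Y=P(C_\infty(\cl A))$ for a contractive idempotent left multiplier $P\in M_\ell(C_\infty(\cl A))$. First I would fix a concrete completely isometric representation $\cl A\hookrightarrow\mathbb B(H)$; then $C_\infty(\cl A)$ sits completely isometrically inside $C_\infty(\mathbb B(H))=\mathbb B(H,H^{(\infty)})$, and by standard multiplier theory (Blecher--Le Merdy) $P$ is implemented by left multiplication by an operator in $\mathbb B(H^{(\infty)})$, idempotent and contractive, hence a projection $e$. So $Y\cong e\,C_\infty(\cl A)$ concretely. Next I want to produce a $\sigma$-TRO $M\subseteq\mathbb B(H,K)$ (with $K=eH^{(\infty)}$, say) such that $M^\star M\,\cl A\subseteq\cl A$ and $\overline{[M\,\cl A]}=Y$. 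The natural candidate is $M=\overline{[e\,C_\infty(\cl A)]}$ viewed inside $\mathbb B(H,K)$ — more precisely the closed span of the columns $e(a_j)_j$ — and one checks $MM^\star M\subseteq M$ using $e^2=e$ together with the fact that $C_\infty(\cl A)C_\infty(\cl A)^\star\subseteq$ (a manageable operator space). The $\sigma$-TRO property (existence of the sequences $(m_i),(n_j)$ with the partial-sum norm bounds) should come from the contractive column $\{\delta_i\otimes e_\lambda\}$ built from a contractive approximate identity $(e_\lambda)$ of $\cl A$ and the matrix units: the column $(m_i)$ playing the role of $e\cdot(\text{standard basis columns})$ gives $\sum m_i m_i^\star\le e\le 1$ and $\sum m_i m_i^\star m=m$. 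Finally $M^\star M\,\cl A\subseteq\cl A$ must be verified — this is where the identity $P\in M_\ell$ (so $P$ commutes appropriately with the right $\cl A$-action) is used — and then $Y_0=\overline{[M\,\cl A]}=e\,C_\infty(\cl A)=Y$ as right $\cl A$-modules, giving $(i)$.

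For the direction $(i)\Rightarrow(ii)$: by Definition \ref{abstract-rigged} I may assume $Y=Y_0=\overline{[M\,a(\cl A)]}$ for a $\sigma$-TRO $M\subseteq\mathbb B(H,K)$ with $M^\star M\,\cl A\subseteq\cl A$ and $\cl A\subseteq\mathbb B(H)$. Using the sequence $(m_i)\subseteq M$ with $\|(m_i)\|\le1$ and $\sum m_i m_i^\star m=m$ for all $m\in M$ — exactly the one already exploited in the proof of Proposition \ref{basic-rigged} — I would define $V:C_\infty(\cl A)\to Y$ by $V((a_j)_j)=\sum_j m_j a_j$ and $U:Y\to C_\infty(\cl A)$ by $U(y)=(m_j^\star y)_j$; the column-norm bound makes $V$ a well-defined complete contraction, $U$ is a complete contraction since $\sum m_j m_j^\star\le 1$, both are right $\cl A$-module maps (using $M^\star M\,\cl A\subseteq\cl A$ to see $m_j^\star y\in\cl A$), and $V\circ U=\mathrm{id}_Y$ by the $\sigma$-TRO reconstruction identity. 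Then $P:=U\circ V:C_\infty(\cl A)\to C_\infty(\cl A)$ is a completely contractive idempotent right $\cl A$-module map with $P(C_\infty(\cl A))=U(Y)\cong Y$. It remains to check $P$ is a \emph{left} multiplier of $C_\infty(\cl A)$, i.e. $P\in M_\ell(C_\infty(\cl A))$; I would do this via the Blecher--Effros--Zarikian characterization of left multipliers — exhibiting the bilinear/Schur-type boundedness, or directly writing $P$ as left multiplication by the operator matrix $(m_i m_j^\star)_{i,j}\in\mathbb B(H^{(\infty)})$, which is a contractive idempotent, hence visibly a left multiplier of $C_\infty(\cl A)$ provided it maps $C_\infty(\cl A)$ into itself — and that last containment is again secured by $M^\star M\,\cl A\subseteq\cl A$. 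This gives $Y\cong P(C_\infty(\cl A))$, i.e. $(ii)$.

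I expect the main obstacle to be the \emph{left-multiplier} bookkeeping rather than the TRO construction: in $(i)\Rightarrow(ii)$ one must certify that $P=U\circ V$ genuinely lies in $M_\ell(C_\infty(\cl A))$ and not merely in $CB(C_\infty(\cl A))$ as a right-module map, which requires invoking the intrinsic characterization of one-sided multipliers and carefully tracking that the implementing operator $(m_im_j^\star)$ preserves $C_\infty(\cl A)$; symmetrically, in $(ii)\Rightarrow(i)$ the delicate point is verifying the $\sigma$-TRO axioms (the uniform partial-sum bounds $\|\sum m_im_i^\star\|\le1$ and the reconstruction limits) for the TRO extracted from the projection $e$, i.e. showing the relevant corner $C^\star$-algebras have $\sigma$-units, using the separability/countability forced by $C_\infty$ and a contractive approximate identity of $\cl A$.
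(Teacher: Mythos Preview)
Your argument for $(i)\Rightarrow(ii)$ is essentially the paper's, modulo a harmless slip: the operator matrix implementing $P=U\circ V$ is $s=(m_i^\star m_j)_{i,j}$, not $(m_i m_j^\star)_{i,j}$ (the latter would have entries in $\mathbb B(K)$, not $\mathbb B(H)$). The paper verifies $\|s\|\le1$ via the factorisation $s=(m_i^\star)_i\,(m_j)_j$ and then invokes \cite[Theorem~4.5.2]{BleLeM04} to certify $P\in M_\ell(C_\infty(\cl A))$, exactly along the lines you sketch.

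The direction $(ii)\Rightarrow(i)$ has a genuine gap. Your candidate $M=\overline{[e\,C_\infty(\cl A)]}$ is $Y$ itself, and for a non-selfadjoint $\cl A$ this is in general \emph{not} a TRO: computing $MM^\star M$ forces expressions of the form $a\,b^\star c$ with $a,b,c\in\cl A$ to lie in $\cl A$, and there is no reason for $\cl A\,\cl A^\star\cl A\subseteq\cl A$. (Your construction does go through when $\cl A$ is a $C^\star$-algebra, but the theorem is stated for arbitrary approximately unital operator algebras.) The difficulty you anticipate---verifying the $\sigma$-TRO axioms---is therefore not the real obstruction; the object fails to be a TRO at all. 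The paper sidesteps this by building $M$ out of scalars and the projection only: after extending $P$ to a selfadjoint $\tilde P\in\mathbb M_\infty(\cl A^{\star\star})$ via the second dual (Appendix~B of \cite{Ble-03}), it takes $D$ to be the $C^\star$-algebra generated by $\tilde P$ and $\cl K_\infty$, and sets $M=\overline{[\tilde P\,D\,C_\infty]}$ with $C_\infty$ the column space of \emph{scalars}. This is visibly a TRO because $D=D^\star$ and $C_\infty R_\infty=\cl K_\infty\subseteq D$, and it is a $\sigma$-TRO by \cite[Lemma~2.5]{Elest}. The required identities $\overline{[M\,\cl A]}=P(C_\infty(\cl A))$ and $M^\star M\,\cl A\subseteq\cl A$ then reduce to the single containment $D\,C_\infty(\cl A)\subseteq C_\infty(\cl A)$, which needs only that $\tilde P$ and $\cl K_\infty$ separately preserve $C_\infty(\cl A)$; no adjoints of elements of $\cl A$ ever enter the argument.
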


\begin{proof}

$i)\implies ii)$

 Let $a:\cl A\to a(\cl A)\subseteq \mathbb{B}(H)$ be a completely isometric representation of $A$ on $H$ and assume there is a $\sigma$-TRO $M\subseteq \mathbb{B}(H,K)$ such that $M^{\star}\,M\,a(\cl A)\subseteq a(\cl A).$ Consider the $\sigma \Delta$-$\cl A$-rigged module $Y_0=\overline{[M\,a(\cl A)]}\subseteq \mathbb{B}(H,K)$ and a complete surjective isometry $\phi:Y\to Y_0$ which is also a right $\cl A$-module map.
Let $\left\{m_i\in M: i\in\mathbb{N}\right\}$ be a sequence of elements of $M$ having the property $$\nor{\sum_{i=1}^n m_i\,m_i^{\star}}\leq 1\,,\forall\,n\in\mathbb{N}\,\,,\sum_{i=1}^{\infty}m_i\,m_i^{\star}\,m=m\,,\forall\,m\in M.$$
It follows that $$\sum_{i=1}^{\infty}m_i\,m_i^{\star}\,y=y\,,\forall\,y\in Y_0.$$

We define the map $f:Y_0\to C_{\infty}(\alpha(\cl A))$ by $f(y)=(m_i^{\star}\,y)_{i\in\mathbb{N}}$, which is linear and a $\cl A$-module map.
Also, $$||f(y)||^2=\nor{\sum_{i=1}^{\infty}(m_i^{\star}\,y)^{\star}\,m_i^{\star}\,y}=\nor{\sum_{i=1}^{\infty}y^{\star}\,m_i\,m_i^{\star}\,y}=||y^{\star}\,y||=||y||^2,$$
so $f$ is an isometry.
We also define $$g:C_{\infty}(\alpha(\cl A))\to Y_0\,,g((\alpha(x_i))_{i\in\mathbb{N}})=\sum_{i=1}^{\infty}m_i \alpha(x_i),$$ which is linear and a contractive $\cl A$ right module map.
We see that $$(g\circ f)(y)=g((m_i^{\star}\,y)_{i\in\mathbb{N}})=\sum_{i=1}^{\infty}m_i\,m_i^{\star}\,y=y\,,\forall\,y\in Y_0,$$ that is, $g\circ f=Id_{Y_0}$.
We now define $P=f\circ g:C_{\infty}(\alpha(\cl A))\to C_{\infty}(\alpha(\cl A)).$ Clearly $P$  is a  contractive map satisfying $P^2=P$.
We shall prove that $P\in M_{\ell}(C_{\infty}(\cl A)).$ For all $x=\begin{pmatrix}x_1\\
x_2\\
...\end{pmatrix}\in C_{\infty}(a(\cl A))$ we have that $$P(x)=\left(m_i^{\star}\,\sum_{j=1}^{\infty}m_j\,x_j\right)_{i\in\mathbb{N}}=s\,x,$$ where $s=\left(m_i^{\star}\,m_j\right)_{i\,,j=1}^{\infty}\in \mathbb{M}_{\infty}(\mathbb{B}(H))$.
Observe that $s=\begin{pmatrix}m_1^{\star}\\
m_2^{\star}\\
...\end{pmatrix}\left(m_1,m_2,...\right)$ and due to the fact that $||(m_1,m_2,...)||\leq 1$ we get $||s||\leq 1.$ We define the map $$\tau_{P}:C_2(C_{\infty}(a(\cl A)))\to C_2(C_{\infty}(a(\cl A)))\,,\tau_{P}\left(\begin{pmatrix}x\\
y\end{pmatrix}\right)=\begin{pmatrix}P(x)\\
y\end{pmatrix}=\begin{pmatrix}s\,x\\
y\end{pmatrix}$$ and for all $\begin{pmatrix}x\\
y\end{pmatrix}\in C_{2}(C_{\infty}(a(\cl A)))$ holds that $$\nor{\tau_{P}\left(\begin{pmatrix}x\\
y\end{pmatrix}\right)}=\nor{\begin{pmatrix}s\,x\\
y\end{pmatrix}}=\nor{\begin{pmatrix} s & 0\\
0 & I_{2}\end{pmatrix}\,\begin{pmatrix}x\\
y\end{pmatrix}}\leq \nor{\begin{pmatrix}x\\
y\end{pmatrix}},$$
so $\tau_{P}$ is a contraction.
Similarly, we can prove that $\tau_{P}$ is completely contractive.
Therefore by \cite[Theorem 4.5.2]{BleLeM04} $P$ is a left multiplier of $C_{\infty}(a(\cl A)).$
It is easy to see that $f(Y_0)=P(C_{\infty}(\alpha(\cl A)))$ and thus $Y\simeq P(C_{\infty}(\alpha(\cl A))).$\\

$ii)\implies i)$ 

Suppose that $\cl A\subseteq \cl A^{\star \star}\subseteq \mathbb{B}(H).$ Let $P:C_{\infty}(\cl A)\to C_{\infty}(\cl A)$ be a left multiplier of $C_{\infty}(\cl A)$ which is a right $\cl A$-module map with $||P||_{cb}\leq 1$ and such that $P^2=P\,\,,Y\cong P(C_{\infty}(\cl A)).$ According to Appendix B of \cite{Ble-03}, there is  an extension $\tilde{P}:C_{\infty}^{w}(\cl A^{\star \star})\to C_{\infty}^{w}(\cl A^{\star \star})$ of $P.$ The operator $\tilde{P}$ lies in the diagonal of $M_{l}(C_{\infty}^{w}(\cl A^{\star \star}))$, which is contained in $ \mathbb{M}_{\infty}(\cl A^{\star \star}).$ Therefore $\tilde{P}=(\tilde{p_{i,j}})_{i,j\in\mathbb{N}}$ where $\tilde{p_{i,j}}\in \cl A^{\star \star}\,,\forall\,i\,,j\in\mathbb{N}$.
Thus, $$\tilde{P}(u)=(\tilde{p_{i,j}})_{i,j\in\mathbb{N}}\cdot u\,\,,\forall\,u=\begin{pmatrix}u_1\\ 
u_2\\
...\end{pmatrix}\in C_{\infty}(\cl A).$$ In what follows we identify $\tilde{P}$ 
and $(\tilde{p_{i,j}})_{i,j}.$ We have that $Y\cong P(C_{\infty}(\cl A))=\tilde{P}(C_{\infty}(\cl A))$ and $\tilde{P}^2=\tilde{P}=\tilde{P}^{\star}.$  Let $N_2=[\tilde{P}], D$ be the $C^{\star}$-algebra generated by $\tilde{P}$ and $\cl{K}_{\infty}$ and  let $N_1=C_{\infty}$.
By \cite[Lemma 2.5]{Elest}, $M=\overline{[N_2\,D\,N_1]}=\overline{[\tilde{P}\,D\,C_{\infty}]}$ is a $\sigma$-TRO.
We claim that $D\,C_{\infty}(\cl A)\subseteq C_{\infty}(\cl A).$ Indeed, $$\tilde{P}(C_{\infty}(\cl A))=P(C_{\infty}(\cl A))\subseteq C_{\infty}(\cl A)\,(1)$$ and $\,C_{\infty}\,R_{\infty}\,C_{\infty}(\cl A)\subseteq C_{\infty}(\cl A).$ Due to the fact that $\cl{K}_{\infty}=C_{\infty}\,R_{\infty}$, we have that $$\cl{K}_{\infty}\,C_{\infty}(\cl A)\subseteq C_{\infty}(\cl A)\,(2).$$ But since $D$ is generated by $\tilde{P}\,,\cl{K}_{\infty}$ by $(1)\,,(2)$ we have that $D\,C_{\infty}(\cl A)\subseteq C_{\infty}(\cl A).$ Now, $$P(C_{\infty}(\cl A))\subseteq \tilde{P}\,D\,C_{\infty}(\cl A)=\overline{[M\,\cl A]}.$$ On the other hand,
\begin{align*}
    \overline{[M\,\cl A]}&=\overline{[\tilde{P}\,D\,C_{\infty}\cdot \cl A]}\\&\subseteq \tilde{P}\,(C_{\infty}(\cl A))\\&=P(C_{\infty}(\cl A))
\end{align*}
so, $$\overline{[M\,\cl A]}=P(C_{\infty}(\cl A)).$$

 Finally,
\begin{align*}
    M^{\star}\,M\,\cl A&\subseteq M^{\star}\,P(C_{\infty}(\cl A))\\&=R_{\infty}\,D\,\tilde{P}(C_{\infty}(\cl A))\\&\subseteq R_{\infty}\,D\,C_{\infty}(\cl A)\\&\subseteq R_{\infty}\,C_{\infty}(\cl A)\\&=R_{\infty}\,C_{\infty}\cdot \cl A\\&=\cl A
\end{align*}
so $Y$ is a right $\sigma \Delta$-$\cl A$-rigged module.

\end{proof}

There is a category of rigged modules, the so-called countably column generated and approximately projective modules. We are going to examine whether there is a connection between them and the $\sigma \Delta$-rigged modules.

\begin{definition}, \cite{Ble-Gen}.

Let $\cl A$ be an approximately unital operator algebra.
A right $\cl A$ operator module $Y$ is called countably column generated and approximately projective  (CCGP for short) if there are completely contractive right $\cl A$-module maps $\phi:Y\to C_{\infty}(\cl A)$ and $\psi:C_{\infty}(\cl A)\to Y$ with $\psi$ finitely $\cl A$-essential (that is, for all $n\in\mathbb{N}$ the restriction map of $\psi$ to $C_{n}(A)\subseteq C_{\infty}(\cl A)$ is right $\cl A$-essential) and also $\psi\circ \phi=Id_{Y}.$

\end{definition}

\begin{remark}

From \cite[Theorem 8.3]{Ble-Gen} and Theorem \ref{orthog}, it is obvious that a CCGP module is a $\sigma \Delta$-rigged module.
The converse is not true.
Indeed, by Theorem 8.2 of \cite{Ble-Gen}, we have that the CCGP modules over $C^{\star}$-algebras are precisely the countably generated right Hilbert modules, but there exist $\sigma \Delta$-Hilbert modules which are not countably generated. For example if $\cl A$  a $C^*-$algebra without $\sigma-$unit, since $\bb C\cl A=\cl A$ then $\cl A$ is a  $\sigma \Delta$-Hilbert module over itself, but clearly is not countably generated, so it is not a CCGP module.

\end{remark}

\section{Doubly $\sigma\Delta-$ rigged  Modules}\label{222}

In this section we introduce a subcategory of $\sigma\Delta-$ rigged  modules, the doubly $\sigma\Delta-$ rigged modules and we prove that these modules implement stable isomorphism between the corresponding operator algebras.  

\begin{definition}
\label{B-M-P-d}

Let $Y$ be a right $\cl A$-operator module over the approximately unital operator algebra $\cl A.$  We call $Y$ a BMP equivalence bimodule if there exist an operator algebra $\cl B$ such that $Y$ is a left $\cl B$-operator module and a $\cl B-\cl A$ operator module $X$ such that $$\cl B\cong Y\otimes_{\cl A}^h X\,\,,\cl A\cong X\otimes_{\cl B}^h Y.$$ In this case we call $X$ and $Y$ bimodules of BMP-Morita equivalence.

\end{definition}

We note that every $\cl B$-$\cl A$-bimodule of Morita equivalence is a right $\cl A$-rigged module.
We now introduce the notion of a doubly $\sigma \Delta$-rigged module.

\begin{definition}

\label{doubly}

Let $\cl A\subseteq \mathbb{B}(H)$ be an approximately unital operator algebra and $M\subseteq \mathbb{B}(H,K)$ be a $\sigma$-TRO such that $$M^{\star}\,M\,\cl A\subseteq \cl A\,\,,\overline{[M^{\star}\,M\,\cl A]}=\overline{[\cl A\,M^{\star}\,M]}.$$
We call the operator space $Y=\overline{[M\,\cl A]}\subseteq \mathbb{B}(H,K)$ a doubly $\sigma$-TRO-$\cl A$-rigged module.

\end{definition}

We note that every doubly $\sigma$-TRO-$\cl A$-rigged module is also a $\sigma$-TRO-$\cl A$-rigged module in the sense of Definition \ref{basic-def}.

\begin{definition}
\label{abstract-doubly}

Let $\cl A$ be an abstract approximately unital operator algebra and $Y$ be an abstract right $\cl A$-module.
We call $Y$ a doubly $\sigma \Delta$-$\cl A$-rigged module if there exists a completely isometric homomorphism $a:\cl A\to a(\cl A)$ and also there exists a doubly $\sigma$-TRO-$a(\cl A)$-rigged module $Y_0$ and  a complete onto isometry $\phi:Y\to Y_0$ which is a right $\cl A$-module map.

\end{definition}
 
\begin{definition}
\label{restriction}

Let $\cl A$ be an approximately unital operator algebra and $Y$ be a $\sigma \Delta$-$\cl A$-rigged module.
There exist $a:\cl A\to a(\cl A)\subseteq \mathbb{B}(H)$, a completely isometric representation of $\cl A$ on $H$, and a $\sigma$-TRO $M\subseteq \mathbb{B}(H,K)$ such that $M^{\star}\,M\,a(\cl A)\subseteq a(\cl A)$ and $Y\cong Y_0=\overline{[M\,a(\cl A)]}.$ Then the operator space $Z=\overline{[Y_0\,M^{\star}\,M]}\subseteq \mathbb{B}(H,K)$ is called the restriction of $\,Y$ over $\cl A.$ Observe that $Z$ is a right  module over the operator algebra $\overline{[a(\cl A)\,M^{\star}\,M]}.$

\end{definition}

In the following theorem we prove that the notions of $\sigma\Delta$-right Hilbert modules and of doubly $\sigma\Delta$-right Hilbert modules coincide:

\begin{theorem}
\label{main0}

Let $\cl A$ be a $C^{\star}$-algebra and let $Y$ be a right Hilbert module over $\cl A.$ The following are equivalent:\\
i) $Y$ is orthogonally complemented in $C_{\infty}(\cl A)$;\\
ii) $Y$ is a $\sigma \Delta$ right Hilbert module over $\cl A$;\\
iii) $Y$ is a doubly $\sigma \Delta$ right Hilbert module over $\cl A.$

\end{theorem}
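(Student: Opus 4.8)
The plan is to prove $i) \Leftrightarrow ii)$ directly from Theorem \ref{orthog} (which already gives this equivalence for any approximately unital operator algebra, in particular for a $C^\star$-algebra), and then to prove $iii) \implies ii)$ and $ii) \implies iii)$ to close the loop. The implication $iii) \implies ii)$ is immediate from the definitions, since a doubly $\sigma$-TRO-$a(\cl A)$-rigged module is in particular a $\sigma$-TRO-$a(\cl A)$-rigged module, and the abstract notions inherit this. So the only real content is $ii) \implies iii)$: given a $\sigma\Delta$-right Hilbert module $Y$ over the $C^\star$-algebra $\cl A$, we must realize it (up to complete onto isometry respecting the module action) as $\overline{[M\,a(\cl A)]}$ for a $\sigma$-TRO $M$ with the extra symmetry condition $\overline{[M^\star M\, a(\cl A)]} = \overline{[a(\cl A)\, M^\star M]}$.

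First I would fix a completely isometric representation $a:\cl A \to a(\cl A) \subseteq \mathbb{B}(H)$ and a $\sigma$-TRO $M \subseteq \mathbb{B}(H,K)$ with $M^\star M\, a(\cl A) \subseteq a(\cl A)$ and $Y \cong Y_0 = \overline{[M\, a(\cl A)]}$, as supplied by Definition \ref{abstract-rigged}. The key observation is that $a(\cl A)$ is a $C^\star$-algebra, hence selfadjoint, so $a(\cl A) = a(\cl A)^\star$ inside $\mathbb{B}(H)$. The $C^\star$-algebra $\overline{[M^\star M]}$ is selfadjoint by construction (it is the closed span of $\{m^\star n : m,n \in M\}$, a $C^\star$-subalgebra of $\mathbb{B}(H)$), and the hypothesis $M^\star M\, a(\cl A) \subseteq a(\cl A)$ says $\overline{[M^\star M\, a(\cl A)]} \subseteq a(\cl A)$. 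Taking adjoints of the inclusion $M^\star M\, a(\cl A) \subseteq a(\cl A)$, and using that both $\overline{[M^\star M]}$ and $a(\cl A)$ are selfadjoint, we get $a(\cl A)\, M^\star M \subseteq a(\cl A)$ as well, hence $\overline{[a(\cl A)\, M^\star M]} \subseteq a(\cl A)$. Both of these are closed subspaces of $a(\cl A)$; to see they are actually equal, note that $\overline{[M^\star M\, a(\cl A)]}$ is the adjoint of $\overline{[a(\cl A)\, M^\star M]}$ (since $(m^\star n\, a)^\star = a^\star\, n^\star m$ and $a(\cl A)^\star = a(\cl A)$), and each is a closed two-sided... actually it suffices that each equals its own adjoint: $\overline{[M^\star M\, a(\cl A)]}^\star = \overline{[a(\cl A)^\star\, (M^\star M)^\star]} = \overline{[a(\cl A)\, M^\star M]}$, so the two spaces are adjoints of one another. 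Since $\overline{[M^\star M]}$ is a $C^\star$-algebra with a (two-sided) approximate identity, and $a(\cl A)$ is a $C^\star$-algebra, the product $\overline{[M^\star M\, a(\cl A)]}$ is a closed selfadjoint subspace: indeed, for a $C^\star$-algebra $D = \overline{[M^\star M]}$ acting on $a(\cl A)$, we have $\overline{[D\, a(\cl A)]} = \overline{[D\, a(\cl A)\, D]}$ by Cohen factorization / the approximate identity of $D$, and the latter is manifestly selfadjoint. This forces $\overline{[M^\star M\, a(\cl A)]} = \overline{[a(\cl A)\, M^\star M]}$, which is precisely the doubly-rigged condition of Definition \ref{doubly}. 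Hence $Y_0$ is a doubly $\sigma$-TRO-$a(\cl A)$-rigged module, and via $\phi: Y \to Y_0$ we conclude $Y$ is a doubly $\sigma\Delta$ right Hilbert module, giving $iii)$.

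The main obstacle — really the only subtlety — is the justification that $\overline{[M^\star M\, a(\cl A)]}$ is selfadjoint; this is where the $C^\star$-algebra hypothesis on $\cl A$ is essential and is exactly what fails for general operator algebras (the whole point of distinguishing the two categories in the non-selfadjoint case). The argument above rests on the selfadjointness of $a(\cl A)$ together with the fact that $\overline{[M^\star M]}$ has an approximate identity that factors through $a(\cl A)$-elements via Cohen's theorem; once that is in place everything is a routine symmetry manipulation. I would present $i) \Leftrightarrow ii)$ as a one-line citation of Theorem \ref{orthog} (noting a $C^\star$-algebra is an approximately unital operator algebra and a right Hilbert $\cl A$-module is a right $\cl A$-operator module), $iii) \implies ii)$ as immediate from the definitions, and devote the body of the proof to $ii) \implies iii)$ as sketched.
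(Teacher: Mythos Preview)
Your route $ii) \Rightarrow iii)$ contains a genuine error. You take the $\sigma$-TRO $M$ supplied by the $\sigma\Delta$ hypothesis and try to show that \emph{this same} $M$ already satisfies the doubly condition $\overline{[M^\star M\,a(\cl A)]}=\overline{[a(\cl A)\,M^\star M]}$. The step ``$\overline{[D\,a(\cl A)]}=\overline{[D\,a(\cl A)\,D]}$ by Cohen factorization / the approximate identity of $D$'' is false: Cohen factorization lets you factor on the \emph{left} by $D$ (since $\overline{[D\,a(\cl A)]}$ is an essential left $D$-module), but gives nothing on the right unless you already know the right action is essential, which is exactly the claim at issue. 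A concrete counterexample: take $\cl A=M_2(\bb C)$, $M=\bb C\,e_{11}$ (so $D=\overline{[M^\star M]}=\bb C\,e_{11}$). Then $M$ is a $\sigma$-TRO with $M^\star M\,\cl A\subseteq\cl A$, but $\overline{[D\,\cl A]}$ is the first row of $M_2$ while $\overline{[\cl A\,D]}$ is the first column, so $\overline{[D\,\cl A]}\ne\overline{[\cl A\,D]}$ and $\overline{[D\,\cl A\,D]}=\bb C\,e_{11}\subsetneq\overline{[D\,\cl A]}$. Thus the given $M$ need not be doubly, even though $\cl A$ is a $C^\star$-algebra.

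The paper avoids this by proving $i)\Rightarrow iii)$ directly rather than $ii)\Rightarrow iii)$: starting from the orthogonal projection $P$ on $C_\infty(\cl A)$, it builds a \emph{specific} $\sigma$-TRO $M=\overline{[N_2\,D\,N_1]}$ (with $N_2=[P]$, $N_1=C_\infty(\cl A^1)$, and $D$ the $C^\star$-algebra generated by $P$ and $K_\infty(\cl A^1)$) and then computes that $\overline{[M^\star M\,\cl A]}=\overline{[R_\infty(\cl A)\,P\,C_\infty(\cl A)]}$, which is selfadjoint because $P=P^\star$ and $\cl A=\cl A^\star$. The point is that the doubly condition is a property of the particular realization $(a,M)$, not of $Y$ alone; one must \emph{construct} a good $M$, and the projection $P$ is what makes that possible. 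Your cycle $i)\Leftrightarrow ii)$, $iii)\Rightarrow ii)$ is fine, but to close it you should replace the flawed $ii)\Rightarrow iii)$ by the paper's $i)\Rightarrow iii)$ construction.
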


\begin{proof}

$i) \implies iii)$

Let $P:C_{\infty}(\cl A)\to C_{\infty}(\cl A)$ be an adjointable map such that $P=P^2=P^{\star}$ and $Y\cong P(C_{\infty}(\cl A)).$ Since $P\in M_{l}(C_{\infty}(\cl A)),$ where $M_{l}(C_{\infty}(\cl A))$ is 
  the left multiplier algebra of $C_{\infty}(\cl A), P$ extends to a multiplier of $C_{\infty}^w(\cl A^{\star \star}).$ Here  $\cl A^{\star \star}$ is the second dual of $\cl A$  and $C_{\infty}^w(\cl A^{\star \star})$ is the space of columns with entries in  $\cl A^{\star \star}$ which define bounded operators.
The algebra of left 
 multipliers of $C_{\infty}^w(\cl A^{\star \star}) $ is isomorphic to $\bb M_{\infty}(\cl A^{\star \star})$ (we refer the reader to \cite{Ble-03}).
Therefore we may assume that there exist $a_{i\,j}\in \cl A^{\star \star}, i,j \in \bb N$ such that 
 $$P(u)=(a_{i\,j})\cdot u\,,\forall\,\, \,u\,\in\, C_{\infty}(\cl A).$$ 
 In what follows we identify $P$ with the matrix $(a_{i\,j}).$ We also may consider a Hilbert space $K$ such that $\cl A\subseteq \cl A^{\star \star}\subseteq \mathbb{B}(K)$ and also $I_{K}\in\cl A^{\star \star}$.

Let $N_2$ be the linear span of the element $P.$ Since $P^2=P=P^{\star}$ we get that $N_2$ is a $\sigma$-TRO.
Let $\cl A^{1}=\overline{[\cl A+\mathbb{C}\,I_{K}]}$ and $N_1=C_{\infty}(\cl A^{1}).$ Clearly $N_1$ is a $\sigma$-TRO.
If $D$ is the $C^{\star}$-algebra generated by $P$  and $K_{\infty}(\cl A^{1})$, then $M=\overline{[N_2\,D\,N_1]}$ is a $\sigma$-TRO, \cite[Lemma 2.5]{Elest}.

 We note that $\overline{[M^{\star}\,M\,\cl A]}=\overline{[N_1^{\star}\,D\,N_2^{\star}\,N_2\,D\,N_1\,\cl A]}=\overline{[N_1^{\star}\,D\,N_2^{\star}\,C_{\infty}(\cl A^{1})\cl A]}\subseteq \cl A$.
If $Y_0=\overline{[M\,\cl A]},$  then  $$Y_0=\overline{[N_2\,D\,N_1\,\cl A]}=\overline{[N_2\,D\,C_{\infty}(\cl A)]}=\overline{[P\,D\,C_{\infty}(\cl A)]}=P(C_{\infty}(\cl A)).$$ We have that
\begin{align*}
    \overline{[M^{\star}\,M\,\cl A]}&=\overline{[M^{\star}\,P(C_{\infty}(\cl A))]}\\&=\overline{[N_1^{\star}\,D\,N_2^{\star}\,P\,C_{\infty}(\cl A)]}\\&=\overline{[N_1^{\star}\,D\,P\,C_{\infty}(\cl A)]}\\&=\overline{[R_{\infty}(\cl A^{1})\,P(C_{\infty}(\cl A))]}\\&=\overline{[R_{\infty}(\cl A)\,P(C_{\infty}(\cl A))]}
\end{align*}
and therefore $$(\overline{[M^{\star}\,M\,\cl A]})^{\star}=(\overline{[R_{\infty}(\cl A)\,P(C_{\infty}(\cl A))]})^{\star}\iff \overline{[\cl A\,M^{\star}\,M]}=\overline{[R_{\infty}(\cl A)\,P(C_{\infty}(\cl A))]}=\overline{[M^{\star}\,M\,\cl A]},$$ which implies that $\overline{[M^{\star}\,M\,\cl A]}=\overline{[\cl A\,M^{\star}\,M]}\subseteq \cl A.$ Since also $Y\cong P(C_{\infty}(\cl A))=Y_0=\overline{[M\,\cl A]}$, we conclude that $Y$ is a doubly $\sigma\Delta$ Hilbert module.\\

$iii) \implies ii)$ 

This is obvious.\\

$ii) \implies i)$ 

This is consequence of Theorem \ref{orthog}.

\end{proof}

At this point, we prove a Lemma which will be very useful for what follows.

\begin{lemma}
\label{cai}

Let $\cl A$ be an operator algebra with cai $(a_k)_{k\in K}$ and $\cl C$ be a $C^{\star}$-algebra with cai $(c_i)_{i\in I}$.
Assume that $\cl C\,\cl A\subseteq \cl A\,,\cl A\,\cl C\subseteq \cl A$.
We define $\cl A_{0}=\overline{[\cl C\,\cl A\,\cl C]}\subseteq \cl A$.
Then $\cl A_{0}$ is an operator algebra with a two-sided approximate identity $$x_{(i,k)}=c_i\,a_k\,c_i\,,i\in I\,,k\in K.$$

\end{lemma}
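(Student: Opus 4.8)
## Proof Proposal for Lemma \ref{cai}

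The plan is to verify directly that $\cl A_0 = \overline{[\cl C\,\cl A\,\cl C]}$ is closed under multiplication (hence an operator algebra, being a norm-closed subalgebra of $\cl A$) and that the net $x_{(i,k)} = c_i\,a_k\,c_i$, indexed by the product directed set $I \times K$, serves as a two-sided contractive approximate identity. First I would observe that $\cl A_0$ is a subalgebra: for $c\,a\,c', d\,b\,d' \in \cl C\,\cl A\,\cl C$ we have $(c\,a\,c')(d\,b\,d') = c\,a\,(c'd)\,b\,d'$, and since $c'd \in \cl C$ and $\cl A\,\cl C \subseteq \cl A$, $\cl C\,\cl A \subseteq \cl A$, the middle factor $a(c'd)b$ lies in $\cl A$; thus the product lies in $\cl C\,\cl A\,\cl C$, and passing to closures gives $\cl A_0\,\cl A_0 \subseteq \cl A_0$. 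Being a norm-closed subalgebra of the operator algebra $\cl A$, it inherits an operator algebra structure.

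Next I would establish that $x_{(i,k)} \in \cl A_0$: each $c_i a_k c_i$ is manifestly in $\cl C\,\cl A\,\cl C$ (using $\cl C\,\cl A \subseteq \cl A$ to see $c_i a_k \in \cl A$), and $\|x_{(i,k)}\| \le 1$ since $\|c_i\|, \|a_k\| \le 1$. For the approximate-identity property, it suffices by a standard density/$\varepsilon$ argument to check convergence on elements of the form $y = c\,a\,c' \in \cl C\,\cl A\,\cl C$, since these span a dense subspace of $\cl A_0$ and the net is uniformly bounded. For such $y$, consider $\|x_{(i,k)}\,y - y\| = \|c_i a_k c_i c a c' - c a c'\|$. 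The idea is to insert and remove terms: first, because $(c_i)$ is a cai for $\cl C$ and $c \in \cl C$, we have $c_i c \to c$; then $a_k(c_i c) \to a_k c$ can be replaced by first noting $a_k(cac') \to cac'$ as $k$ runs along $K$ (here I use that $(a_k)$ is a cai for $\cl A$ and $cac' \in \cl A$); finally $c_i(a_k cac') \to a_k c a c'$ along $I$ because $a_k c a c' \in \cl A$ and $c_i$ is a left cai acting on $\cl C\,\cl A \subseteq \cl A$ — more precisely, since $c_i c \to c$ uniformly on the relevant bounded pieces. Assembling these estimates along the product net $I \times K$ and using $\|c_i\| \le 1$, $\|a_k\| \le 1$ to control cross terms, one gets $x_{(i,k)}\,y \to y$; the right-handed statement $y\,x_{(i,k)} \to y$ is symmetric (using $c' c_i \to c'$ and the right cai properties).

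The main obstacle is organizing the limit along the product directed set $I \times K$ cleanly: one has three approximations happening at once ($c_i c \to c$ on the left, $c' c_i \to c'$ on the right, and $a_k \cdot \to \cdot$ in the middle), and one must show they combine. The cleanest way is to fix $\varepsilon > 0$, write
\begin{align*}
\|c_i a_k c_i c a c' - c a c'\| &\le \|c_i a_k c_i c a c' - c_i a_k c a c'\| + \|c_i a_k c a c' - c_i\, c a c'\| \\
&\quad + \|c_i\, c a c' - c a c'\|,
\end{align*}
bound the first term by $\|c_i c - c\|\cdot\|a\|\cdot\|c'\|$ (using $\|c_i\|,\|a_k\|\le 1$ — wait, one must move $a_k$ past, so more carefully bound it by $\|c_i c - c\|$ times a constant after noting $\|c_i a_k\| \le 1$), the third term by $\|c_i c - c\|$ times a constant, and the middle term by $\sup_i\|c_i\|\cdot\|a_k (cac') - cac'\|$. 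Choosing $k$ large so the middle term is $< \varepsilon/3$ (uniformly in $i$, since it does not involve $i$), and then $i$ large so the outer terms are $< \varepsilon/3$ each, gives the result; the point is that the middle estimate is uniform in $i$, so the choices can be made consistently along the product net. I would also remark that $\|x_{(i,k)}\| \le 1$ makes $\cl A_0$ approximately unital with a \emph{contractive} approximate identity, which is what later sections require.
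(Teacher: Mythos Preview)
Your argument is correct and follows essentially the same strategy as the paper: show $\cl A_0$ is a subalgebra by collapsing $\cl C\cl A\cl C\cl C\cl A\cl C$ into $\cl C\cl A\cl C$, then verify the approximate-identity property by a three-term triangle-inequality estimate exploiting $\|c_i\|,\|a_k\|\le 1$. The only cosmetic difference is that the paper first records $c_i\,a\to a$ and $a_k\,a\to a$ for any $a\in\cl A_0$ and then runs the estimate $\|c_i a_k c_i a - a\|\le 2\|c_i a-a\|+\|a_k a-a\|$ directly, whereas you work on dense generators $cac'$ and carry the density argument explicitly; both organizations lead to the same conclusion.
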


\begin{proof}

The space $\cl A_{0}$ is a closed subspace of $\cl A$ and is an algebra since $$\cl A_{0}\,\cl A_{0}\subseteq \overline{[\cl C\,\cl A\,\cl C\,\cl C\,\cl A\,\cl C]}\subseteq \overline{[C\,\cl A\,C\,\cl A\,\cl C]}\subseteq \overline{[C\,\cl A\,\,\cl A\,\cl C]}\subseteq \overline{[\cl C\,\cl A\,\cl C]}=\cl A_{0}.$$
It is obvious that $x_{(i,k)}=c_i\,a_k\,c_i\in \cl A_{0}\,,i\in I\,,k\in K$ and $\cl A_{0}\subseteq \cl A.$ 
Now, if $a\in \cl A_{0}$, then $c_i\,a\to a$ and $a_k\,a\to a$.
For all $i\in I\,,k\in K$ we have
\begin{align*}
    ||x_{(i,k)}\,a-a||&=||c_i\,a_k\,c_i\,a-a||\\&\leq ||c_i\,a_k\,c_i\,a-c_i\,a||+||c_i\,a-a||\\&\leq ||a_k\,c_i\,a-a||+||c_i\,a-a||\\&\leq ||a_k\,c_i\,a-a_k\,a||+||a_k\,a-a||+||c_i\,a-a||\\&\leq ||c_i\,a-a||+||a_k\,a-a||+||c_i\,a-a||\\&=2\,||c_i\,a-a||+||a_k\,a-a||
\end{align*}
Thus, $$\lim_{(i,k)}x_{(i,k)}\,a=a.$$ Similarly, we can prove that $$\lim_{(i,k)}a\,x_{(i,k)}=a.$$

\end{proof}

\begin{lemma}
\label{useful}

Let $\cl A\subseteq \mathbb{B}(H)$ be an approximately unital operator algebra and $M\subseteq \mathbb{B}(H,K)$ be a $\sigma$-TRO such that $M^{\star}\,M\,\cl A\subseteq \cl A$.
We also assume that $\cl A\,M^{\star}\,M\subseteq \cl A.$ We define $\cl B=\overline{[M\,\cl A\,M^{\star}]}\subseteq \mathbb{B}(K)$ and also $\cl A_{0}=\overline{[M^{\star}\,\cl B\,M]}\subseteq \mathbb{B}(H).$ Then $\cl A_{0}$ and $\cl B$ are approximately unital operator algebras and $\cl A_{0}\sim_{\sigma\,TRO}\cl B.$

\end{lemma}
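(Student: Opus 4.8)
The plan is to show first that $\cl B=\overline{[M\,\cl A\,M^{\star}]}$ is an operator algebra and that it is approximately unital, then to produce a cai for $\cl A_{0}=\overline{[M^{\star}\,\cl B\,M]}$, and finally to identify the $\sigma$-TROs implementing the $\sigma$-TRO equivalence. For the algebra structure of $\cl B$, I would use the hypothesis $M^{\star}M\cl A\subseteq\cl A$ together with $M\,M^{\star}\,M\subseteq M$: then $\cl B\,\cl B=\overline{[M\cl A M^{\star}M\cl A M^{\star}]}\subseteq\overline{[M\cl A\cl A M^{\star}]}\subseteq\cl B$. The same computation shows $\cl A_{0}$ is an algebra, using in addition $\cl A\,M^{\star}M\subseteq\cl A$.

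For the approximate identities, write $(a_k)$ for a cai of $\cl A$; the natural candidate for a cai of $\cl B$ is a net built from $M\,a_k\,M^{\star}$ truncated against the $\sigma$-TRO data. More precisely, since $M$ is a $\sigma$-TRO, pick sequences $(m_i)$, $(n_j)$ as in the definition, and consider the net $e_{(n,k,t)}=\bigl(\sum_{i=1}^{n}m_i m_i^{\star}\bigr)\bigl(\text{suitable approximant from }\cl A\bigr)\bigl(\sum_{j=1}^{t}n_j^{\star}n_j\bigr)$ lying in $\cl B$; convergence to the identity on elements $m\,a\,m'^{\star}$ follows by the $\sigma$-TRO relations $\sum m_i m_i^{\star}m\to m$ and $\sum m' n_j^{\star}n_j\to m'$, combined with $a_k$-convergence in the middle, exactly in the telescoping style of Lemma \ref{cai}. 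This shows $\cl B$ is approximately unital. For $\cl A_{0}$, the cleanest route is to invoke Lemma \ref{cai} directly: set $\cl C=\overline{[M^{\star}M]}$, which is a $C^{\star}$-algebra (being $M^{\star}M$ for a TRO) with a cai because $M$ is a $\sigma$-TRO; the hypotheses $\cl C\cl A\subseteq\cl A$ and $\cl A\cl C\subseteq\cl A$ are precisely $M^{\star}M\cl A\subseteq\cl A$ and $\cl A M^{\star}M\subseteq\cl A$; and one checks $\cl A_{0}=\overline{[M^{\star}\cl B M]}=\overline{[M^{\star}M\cl A M^{\star}M]}=\overline{[\cl C\cl A\cl C]}$, so Lemma \ref{cai} hands us the cai $x_{(i,k)}=c_i a_k c_i$.

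For the $\sigma$-TRO equivalence $\cl A_{0}\sim_{\sigma TRO}\cl B$, I would take $M_1=M_2=M$ (viewed in $\mathbb{B}(H,K)$) and check the two defining identities $\cl B=\overline{[M\,\cl A_{0}\,M^{\star}]}$ and $\cl A_{0}=\overline{[M^{\star}\,\cl B\,M]}$. The second is the definition of $\cl A_{0}$. For the first, $\overline{[M\cl A_{0}M^{\star}]}=\overline{[M M^{\star}\cl B M M^{\star}]}$, and using that $\sum m_i m_i^{\star}$ acts as an approximate identity on $M$ from the left (so $\overline{[M M^{\star} M]}=M$, hence $\overline{[M M^{\star}\cl B]}\supseteq$ enough of $\cl B$) together with the symmetric statement on the right, one gets $\overline{[M M^{\star}\cl B M M^{\star}]}=\cl B$; here the identity $\cl B=\overline{[M\cl A M^{\star}]}$ and $M M^{\star}M=M$ (in the closure sense) make the two truncations collapse. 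Finally $M$ is a $\sigma$-TRO by hypothesis, so the equivalence is genuinely a $\sigma$-TRO equivalence.

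The main obstacle I anticipate is the bookkeeping in showing that the candidate nets are genuinely (contractive, two-sided) approximate identities for $\cl B$ and $\cl A_{0}$ — in particular verifying the norm bounds $\nor{\sum_{i=1}^{n}m_i m_i^{\star}}\le 1$ and $\nor{\sum_{j=1}^{t}n_j^{\star}n_j}\le 1$ propagate through the products without blowing up the cai constant, and checking convergence on a dense set (elements of the form $m a m'^{\star}$, resp. $m^{\star}b m$) before extending by density and uniform boundedness. Reducing the $\cl A_{0}$ part to Lemma \ref{cai} sidesteps most of this for $\cl A_{0}$; the $\cl B$ part is the one place where the $\sigma$-TRO relations must be used by hand, but it is the same telescoping argument used in the proof of Proposition \ref{basic-rigged} and Lemma \ref{cai}, so no new idea is needed.
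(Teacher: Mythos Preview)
Your outline matches the paper's proof almost entirely: the algebra checks for $\cl B$ and $\cl A_0$ are the same computations, the cai for $\cl A_0$ via Lemma \ref{cai} applied to $\cl C=\overline{[M^\star M]}$ (after the identification $\cl A_0=\overline{[M^\star M\,\cl A\,M^\star M]}$) is exactly the paper's route, and the $\sigma$-TRO equivalence via the single TRO $M$ --- verifying $\overline{[M\cl A_0 M^\star]}=\overline{[M M^\star M\,\cl A\,M^\star M M^\star]}=\overline{[M\cl A M^\star]}=\cl B$ by collapsing with $\overline{[MM^\star M]}=M$ --- is the paper's computation verbatim.

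The one real divergence is the cai for $\cl B$, and here your candidate net is not well-formed: $\sum_i m_i m_i^\star\in\bb B(K)$ and $\sum_j n_j^\star n_j\in\bb B(H)$, while any ``approximant from $\cl A$'' lies in $\bb B(H)$, so the triple product $\bigl(\sum_i m_i m_i^\star\bigr)\,a\,\bigl(\sum_j n_j^\star n_j\bigr)$ does not compose (the left factor acts on $K$, the middle on $H$) and certainly cannot land in $\cl B\subseteq\bb B(K)$. The obstacle you flag --- norm bookkeeping and convergence on a dense set --- is not the actual problem; the expression is dimensionally meaningless. The paper sidesteps any direct construction by reordering the argument: it first records $\cl A_0\sim_{\sigma TRO}\cl B$ (which needs only the algebraic identities, no approximate units), then obtains the cai for $\cl A_0$ from Lemma \ref{cai}, and concludes in one line that $\cl B$ inherits a cai through the $\sigma$-TRO equivalence. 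If you insist on an explicit net in $\cl B$, the correct template is to sandwich the cai of $\cl A_0$ (not of $\cl A$) between $M$ and $M^\star$, e.g.\ $\sum_i m_i\,x_\lambda\,m_i^\star$; but the transfer argument makes this unnecessary.
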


\begin{proof}

It is sufficient to prove that $\cl A_{0}\,,\cl B$ are closed under multiplication and that $\cl A_{0}\sim_{\sigma\,TRO}\cl B.$ Indeed, $$\cl B\,\cl B\subseteq \overline{[M\,\cl A\,M^{\star}\,M\,\cl A\,M^{\star}]}\subseteq \overline{[M\,\cl A\,\cl A\,M^{\star}]}=\overline{[M\,\cl A\,M^{\star}]}=\cl B$$ so $\cl B$ is an operator algebra.
Now, we observe that $M\,M^{\star}\,\cl B\subseteq \cl B$ and then $$\cl A_{0}\,\cl A_{0}\subseteq \overline{[M^{\star}\,\cl B\,M\,M^{\star}\,\cl B\,M]}\subseteq \overline{[M^{\star}\,\cl B\,\cl B\,M]}\subseteq \overline{[M^{\star}\,\cl B\,M]}=\cl A_{0}$$ which means that $\cl A_{0}$ is an operator algebra.
We have that $\cl A_{0}=\overline{[M^{\star}\,\cl B\,M]}=\overline{[M^{\star}\,M\,\cl A\,M^{\star}\,M]}.$  If $C$ is the $C^*$-algebra $\overline{[M^{\star}\,M]}$, then  $C\,\cl A\subseteq \cl A\,\,,\cl A\,C\subseteq \cl A$.
By Lemma \ref{cai}, the operator algebra $\cl A_{0}$ has a cai.
Also, since $\cl A_{0}=\overline{[M^{\star}\,\cl B\,M]}$ and on the other hand $$\overline{[M\,\cl A_{0}\,M^{\star}]}=\overline{[M\,M^{\star}\,\cl B\,M\,M^{\star}]}=\overline{[M\,M^{\star}\,M\,\cl A\,M^{\star}\,M\,M^{\star}]}=\overline{[M\,\cl A\,M^{\star}]}=\cl B$$ we deduce that $\cl A_{0}\sim_{\sigma TRO}\cl B.$ Since $\cl A_{0}$ has a cai, we have that $\cl B$ has also a cai.

\end{proof}

\begin{theorem}
\label{TRO}

Let $\cl A$ be an approximately unital operator algebra and $Y$ be a doubly $\sigma \Delta$-$\cl A$-rigged module.
Then, there exist operator algebras $\cl A_{0}\,,\cl B$ with cai's such that $\cl A_{0}\sim_{\sigma TRO}\cl B$ and also $\cl B\sim_{\sigma TRO}Y.$ In case $\cl A$ is a $C^*-$algebra and $Y$ is a  $\sigma \Delta$-$\cl A$-Hilbert module then $\cl A_{0}\simeq I_{\cl A}(Y), B\simeq K_{\cl A}(Y).$

\end{theorem}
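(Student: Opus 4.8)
The plan is to realize $Y$ concretely as $Y=\overline{[M\,a(\cl A)]}\subseteq\mathbb{B}(H,K)$ for a $\sigma$-TRO $M\subseteq\mathbb{B}(H,K)$ with $M^\star M\,a(\cl A)\subseteq a(\cl A)$ and $\overline{[M^\star M\,a(\cl A)]}=\overline{[a(\cl A)\,M^\star M]}$, as permitted by Definition \ref{abstract-doubly}; we suppress $a$ and write $\cl A$ for $a(\cl A)$. The doubly condition gives $\cl A\,M^\star M\subseteq\overline{[M^\star M\,\cl A]}\subseteq\cl A$, so Lemma \ref{useful} applies directly with this $M$. Setting $\cl B=\overline{[M\,\cl A\,M^\star]}$ and $\cl A_0=\overline{[M^\star\,\cl B\,M]}=\overline{[M^\star M\,\cl A\,M^\star M]}$, Lemma \ref{useful} immediately yields that $\cl A_0$ and $\cl B$ are approximately unital operator algebras with $\cl A_0\sim_{\sigma TRO}\cl B$. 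This handles the first assertion.

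For $\cl B\sim_{\sigma TRO}Y$: I would exhibit $\sigma$-TROs implementing the equivalence between $\cl B\subseteq\mathbb{B}(K)$ and $Y\subseteq\mathbb{B}(H,K)$. The natural candidates are $M_2=M\subseteq\mathbb{B}(H,K)$ on one side and $M_1=C=\overline{[M^\star M]}\subseteq\mathbb{B}(H)$ (a $C^\star$-algebra, hence a $\sigma$-TRO since $M$ is a $\sigma$-TRO, using that $\overline{[M^\star M]}$ has a $\sigma$-unit) on the other. Then one computes
\begin{align*}
\overline{[M_2^\star\,\cl B\,M_1]}&=\overline{[M^\star\,M\,\cl A\,M^\star\,M\,M^\star M]}=\overline{[M^\star M\,\cl A\,M^\star M]}=\cl A_0,\\
\overline{[M_2\,Y\,M_1^\star]}&=\overline{[M\,M\,\cl A\,M^\star M]}
\end{align*}
— wait; this suggests the equivalence is really between $\cl B$ and $\cl A_0$, not $Y$. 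For the statement $\cl B\sim_{\sigma TRO}Y$, one instead checks
$$\overline{[M^\star\,\cl B\,M\,M^\star]}=\overline{[M^\star M\,\cl A\,M^\star M\,M^\star]}=\overline{[M^\star M\,\cl A\,M^\star]}=\overline{[\cl A\,M^\star M\,M^\star]}=\overline{[\cl A\,M^\star]}$$
and $\overline{[M\,(\overline{[\cl A\,M^\star]})\,M^\star\,M]}$, organizing the identities so that $Y=\overline{[M\,\cl A]}$ is recovered from $\cl B$ by multiplying by a $\sigma$-TRO on the left and a $C^\star$-algebra-with-$\sigma$-unit on the right, and conversely. The bookkeeping is driven entirely by the two absorption identities $M M^\star M=M$ (as operator-space closures) and $\overline{[M^\star M\,\cl A]}=\overline{[\cl A\,M^\star M]}\subseteq\cl A$; I expect this matching of closed linear spans to be the main (though routine) obstacle, since one must be careful that each equality is an equality of norm-closed spans and that the $\sigma$-TRO axioms transfer.

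For the final sentence, suppose $\cl A$ is a $C^\star$-algebra and $Y$ a $\sigma\Delta$-$\cl A$-Hilbert module; by Theorem \ref{main0}, $Y$ is automatically doubly $\sigma\Delta$, so the construction above applies. I would identify $M^\star=\{\langle y,\cdot\rangle\text{-type maps}\}$ so that $\overline{[M^\star\,Y]}=\overline{[M^\star M\,\cl A]}$ is exactly the ideal $I_{\cl A}(Y)=\overline{\mathrm{span}}\{\langle y,z\rangle\}$, whence $\cl A_0=\overline{[M^\star M\,\cl A\,M^\star M]}=\overline{[I_{\cl A}(Y)\cdot I_{\cl A}(Y)]}=I_{\cl A}(Y)$ since $I_{\cl A}(Y)$ is a $C^\star$-algebra (a closed two-sided ideal) and thus idempotent. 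Dually, $\cl B=\overline{[M\,\cl A\,M^\star]}=\overline{[Y\cdot Y^\star]}$ is the algebra of "compact" operators $\bb K_{\cl A}(Y)$, by the standard identification of $Y Y^\star$ with rank-one adjointable operators $\theta_{y,z}$ and the fact that these span a dense subspace of $\bb K_{\cl A}(Y)$. Hence $\cl A_0\simeq I_{\cl A}(Y)$ and $\cl B\simeq\bb K_{\cl A}(Y)$, completing the proof.
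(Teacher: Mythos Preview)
Your setup and the first assertion ($\cl A_0\sim_{\sigma TRO}\cl B$ via Lemma~\ref{useful}) are exactly the paper's approach and are fine.

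The genuine gap is in the step $\cl B\sim_{\sigma TRO}Y$. Recall that for $X\subseteq\mathbb{B}(H,K)$ and $Z\subseteq\mathbb{B}(L,R)$ the $\sigma$-TROs implementing the equivalence must live in $M_1\subseteq\mathbb{B}(H,L)$ and $M_2\subseteq\mathbb{B}(K,R)$. Here $\cl B\subseteq\mathbb{B}(K)$ and $Y\subseteq\mathbb{B}(H,K)$, so one needs $M_1\subseteq\mathbb{B}(K,H)$ and $M_2\subseteq\mathbb{B}(K)$. Your candidates $M_2=M\subseteq\mathbb{B}(H,K)$ and $M_1=\overline{[M^\star M]}\subseteq\mathbb{B}(H)$ are in the wrong spaces, so the products $M_2^\star\,\cl B\,M_1$ and $M_2\,Y\,M_1^\star$ do not even compose; this is why your first computation collapsed to $\cl A_0$ and the second attempt never crystallized into a specific pair of $\sigma$-TROs. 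The hand-waving about ``organizing the identities'' does not repair this: without a concrete choice of $M_1,M_2$ you have not shown $\cl B\sim_{\sigma TRO}Y$.

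The paper's choice is $M_1=M^\star\subseteq\mathbb{B}(K,H)$ and $M_2=\overline{[M\,M^\star]}\subseteq\mathbb{B}(K)$. With these one gets directly
\[
\overline{[M_2^\star\,Y\,M_1]}=\overline{[M\,M^\star\,M\,\cl A\,M^\star]}=\overline{[M\,\cl A\,M^\star]}=\cl B
\]
and, using the doubly condition $\overline{[\cl A\,M^\star M]}=\overline{[M^\star M\,\cl A]}$,
\[
\overline{[M_2\,\cl B\,M_1^\star]}=\overline{[M\,M^\star\,M\,\cl A\,M^\star\,M]}=\overline{[M\,\cl A\,M^\star M]}=\overline{[M\,M^\star M\,\cl A]}=\overline{[M\,\cl A]}=Y.
\]
Note that the doubly condition is essential in the last display; your sketch never invokes it at this point, which is another sign the argument was incomplete.

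Your treatment of the $C^\star$-case is more detailed than the paper's (which simply says the assertions ``follow easily'') and is essentially correct in spirit, though the identification $\overline{[M^\star M\,\cl A]}=I_{\cl A}(Y)$ should be justified via $\overline{[Y^\star Y]}=\overline{[\cl A\,M^\star M\,\cl A]}=\overline{[M^\star M\,\cl A]}$ rather than by an informal appeal to ``$\langle y,\cdot\rangle$-type maps''.
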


\begin{proof}

Let $H$ be a Hilbert space, $\,a:\cl A\to a(\cl A)\subseteq \mathbb{B}(H)$ be a completely isometric representation of $\cl A$ on $H$, and let $M\subseteq \mathbb{B}(H,K)$ be a $\sigma$-TRO such that $M^{\star}\,M\,a(\cl A)\subseteq a(\cl A)$ and also $$\overline{[M^{\star}\,M\,a(\cl A)]}=\overline{[a(\cl A)\,M^{\star}\,M]}\,\,(1)$$ Consider now a complete surjective isometry $$\phi:Y\to Y_0=\overline{[M\,a(\cl A)]}\subseteq \mathbb{B}(H,K)$$ which is a right $\cl A$-module map.
We define the spaces $\cl B=\overline{[M\,a(\cl A)\,M^{\star}]}\subseteq \mathbb{B}(K)$ and $\cl A_{0}=\overline{[M^{\star}\,\cl B\,M]}\subseteq \mathbb{B}(H)$. Now by Lemma \ref{useful}, $\cl A_{0}\,,\cl B$ are operator algebras with cai's such that $\cl A_{0}\sim_{\sigma\,TRO}\cl B.$  It remains to prove that $\cl B\sim_{\sigma TRO} Y$.
Set $M_1=M^{\star}\subseteq \mathbb{B}(K,H)$ and $M_2=\overline{[M\,M^{\star}]}\subseteq \mathbb{B}(K)$.
Then, $M_1\,,M_2$ are $\sigma$-TRO's and we have that $$\overline{[M_2^{\star}\,\phi(Y)\,M_1]}=\overline{[M\,M^{\star}\,M\,a(\cl A)\,M^{\star}]}=\overline{[M\,a(\cl A)\,M^{\star}]}=\cl B$$
and $$
    \overline{[M_2\,\cl B\,M_1^{\star}]}=\overline{[M\,M^{\star}\,M\,a(\cl A)\,M^{\star}\,M]}=\overline{[M\,a(\cl A)\,M^{\star}\,M]}\stackrel{(1)}{=}\overline{[M\,M^{\star}\,M\,a(\cl A)]}=\overline{[M\,a(\cl A)]}=\phi(Y).$$
Now by Definition \ref{eq}, $\cl B\sim_{\sigma TRO}Y.$

The other assertions follow easily.
 
\end{proof}

\begin{theorem}
\label{restr-doubly}

Let $\cl A$ be an approximately unital operator algebra, $a:\cl A\to \mathbb{B}(H)$ be a completely isometric homomorphism and $\,M\subseteq \mathbb{B}(H,K)$ be a $\sigma$-TRO such that $M^{\star}\,M\,a(\cl A)\subseteq a(\cl A).$ 
We define the $\sigma \Delta$-$\cl A$-rigged module $Y=\overline{[M\,a(\cl A)]}$.
Then there exist operator algebras $\cl A_{0}\,,\cl B$ with cai's and a restriction $Z$ of $\,Y$ such that  $\,Z$ is a doubly $\sigma \Delta$-$\cl A_{0}$-rigged module and $\cl A_{0}\sim_{\sigma TRO}\cl B\sim_{\sigma TRO}Z.$

\end{theorem}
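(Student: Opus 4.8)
The plan is to produce the restriction $Z$ of $Y$ explicitly in a concrete representation, identify the operator algebra $\cl A_0$ over which it lives, and then quote Lemma~\ref{useful} to obtain everything at once. First I would pass to the concrete picture supplied by the hypotheses: $a:\cl A\to a(\cl A)\subseteq \mathbb{B}(H)$ is completely isometric, $M\subseteq \mathbb{B}(H,K)$ is a $\sigma$-TRO with $M^\star M\,a(\cl A)\subseteq a(\cl A)$, and $Y=Y_0=\overline{[M\,a(\cl A)]}$. Following Definition~\ref{restriction}, set $C=\overline{[M^\star M]}$, a $C^\star$-algebra, put $\cl A_0=\overline{[a(\cl A)\,M^\star M]}=\overline{[a(\cl A)\,C]}$ (equivalently $\overline{[C\,a(\cl A)]}$ after symmetrizing — see below), and define the restriction $Z=\overline{[Y_0\,M^\star M]}=\overline{[M\,a(\cl A)\,M^\star M]}$, which is a right module over $\cl A_0$.

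The first real step is to arrange that the symmetry hypothesis of the doubly-$\sigma\Delta$ situation actually holds for $\cl A_0$ and the relevant TRO. The key observation is that $M^\star M\,\cl A_0\subseteq \cl A_0$ and $\cl A_0\,M^\star M\subseteq \cl A_0$ both hold automatically: indeed $M^\star M\,a(\cl A)\,M^\star M\subseteq a(\cl A)\,M^\star M\cdot M^\star M\subseteq a(\cl A)\,M^\star M=\cl A_0$ using $M^\star M M^\star M\subseteq M^\star M$ (since $M$ is a TRO), and similarly on the other side. Moreover one checks $\overline{[M^\star M\,\cl A_0]}=\overline{[M^\star M\,a(\cl A)\,M^\star M]}=\overline{[\cl A_0\,M^\star M]}$, so the doubly condition of Definition~\ref{doubly} is met. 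Hence, with $\cl A_0$ in the role of ``$\cl A$'' of Lemma~\ref{useful}, the space $\overline{[M\,\cl A_0]}=\overline{[M\,a(\cl A)\,M^\star M]}=Z$ is a doubly $\sigma$-TRO-$\cl A_0$-rigged module, so $Z$ is a doubly $\sigma\Delta$-$\cl A_0$-rigged module in the sense of Definition~\ref{abstract-doubly}.

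Next I would extract the two operator algebras and the $\sigma$-TRO equivalences directly from Lemma~\ref{useful}, applied with $\cl A_0$ in place of $\cl A$ and the same $M$: this gives $\cl B=\overline{[M\,\cl A_0\,M^\star]}=\overline{[M\,a(\cl A)\,M^\star]}$ and the algebra $\overline{[M^\star\,\cl B\,M]}$, which equals $\cl A_0$ itself because $\overline{[M^\star M\,a(\cl A)\,M^\star M]}=\cl A_0$; Lemma~\ref{useful} then yields that $\cl A_0,\cl B$ are approximately unital operator algebras with $\cl A_0\sim_{\sigma TRO}\cl B$. For the final link $\cl B\sim_{\sigma TRO}Z$, I would run exactly the computation in the proof of Theorem~\ref{TRO}: with $M_1=M^\star$ and $M_2=\overline{[MM^\star]}$ one has $\overline{[M_2^\star\,Z\,M_1]}=\overline{[MM^\star M\,a(\cl A)\,M^\star M M^\star]}=\overline{[M\,a(\cl A)\,M^\star]}=\cl B$ and $\overline{[M_2\,\cl B\,M_1^\star]}=\overline{[MM^\star M\,a(\cl A)\,M^\star\,M M^\star M]}=\overline{[M\,a(\cl A)\,M^\star M]}=Z$, the last step using that $M^\star M$ already acts as an approximate unit on the right of $Z$. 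Chaining these gives $\cl A_0\sim_{\sigma TRO}\cl B\sim_{\sigma TRO}Z$, as required.

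The step I expect to be the main obstacle is the bookkeeping that $Z$ really is a \emph{doubly} $\sigma$-TRO-$\cl A_0$-rigged module and that the ambient algebra obtained by reabsorbing $M^\star(\,\cdot\,)M$ collapses back to $\cl A_0$ rather than to something strictly larger; this rests on the TRO identity $M^\star M M^\star M\subseteq M^\star M$ together with the $\sigma$-TRO approximate-unit property being inherited when one passes from $\cl A$ to $\cl A_0$. Once these closure identities are verified, everything else is a direct application of Lemmas~\ref{cai},~\ref{useful} and the computations already carried out in Theorem~\ref{TRO}, and no new ideas are needed.
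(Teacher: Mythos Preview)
Your overall strategy matches the paper's: form the restriction $Z=\overline{[M\,a(\cl A)\,M^\star M]}$, identify an algebra $\cl A_0$ so that $Z=\overline{[M\,\cl A_0]}$ is doubly $\sigma$-TRO over $\cl A_0$, set $\cl B=\overline{[M\,a(\cl A)\,M^\star]}$, and then read off the two $\sigma$-TRO equivalences. The problem is your choice of $\cl A_0$. You take $\cl A_0=\overline{[a(\cl A)\,M^\star M]}$ and assert that $\overline{[M^\star M\,\cl A_0]}=\overline{[\cl A_0\,M^\star M]}$ and that $\overline{[a(\cl A)\,C]}=\overline{[C\,a(\cl A)]}$ ``after symmetrizing''. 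Neither identity follows from the hypothesis $M^\star M\,a(\cl A)\subseteq a(\cl A)$ alone. Concretely, let $H=\mathbb{C}^3$, $K=\mathbb{C}$, $a(\cl A)$ the lower triangular $3\times 3$ matrices, and $M=\mathbb{C}\,e_1^{\,t}$, so $M^\star M=\mathbb{C}P_1$ with $P_1$ the rank-one projection onto $e_1$. Then $P_1\,a(\cl A)\subseteq a(\cl A)$, but $\overline{[a(\cl A)\,P_1]}$ is the full first column (a non-approximately-unital algebra), whereas $\overline{[P_1\,a(\cl A)\,P_1]}=\mathbb{C}P_1$. Thus $\overline{[M^\star M\,\cl A_0]}\subsetneq\overline{[\cl A_0\,M^\star M]}=\cl A_0$, the doubly condition of Definition~\ref{doubly} fails for your $\cl A_0$, your $\cl A_0$ has no cai, and the identification $\overline{[M^\star\,\cl B\,M]}=\cl A_0$ you need when invoking Lemma~\ref{useful} is false.

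The fix, and what the paper does, is to take the two-sided sandwich $\cl A_0=\overline{[M^\star M\,a(\cl A)\,M^\star M]}$. With this choice the equalities $\overline{[M^\star M\,\cl A_0]}=\cl A_0=\overline{[\cl A_0\,M^\star M]}$ and $\overline{[M\,\cl A_0]}=Z$ are immediate from $\overline{[M^\star M\,M^\star M]}=\overline{[M^\star M]}$, so $Z$ really is doubly $\sigma$-TRO over $\cl A_0$; and $\overline{[M^\star\,\cl B\,M]}$ collapses back to this $\cl A_0$ on the nose, so Lemmas~\ref{cai} and~\ref{useful} apply. Your computations for $\cl B\sim_{\sigma TRO}Z$ are fine (modulo a stray $M^\star M$ in one displayed product) and coincide with the paper's.
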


\begin{proof}

 We define the restriction $Z=\overline{[Y\,M^{\star}\,M]}=\overline{[M\,a(\cl A)\,M^{\star}\,M]}$ of $\,Y.$ Let $\cl A_0=\overline{[\,M^{\star}\,M a(\cl A)\,M^{\star}\,M]}$ then 
$\cl A_0$ is an operator algebra and $$\overline{[M\,\cl A_{0}]}=\overline{[M\,M^{\star}\,M\,a(\cl A)\,M^{\star}\,M]}=\overline{[M\,a(\cl A)\,M^{\star}\,M]}=Z$$ 
such that $$\overline{[\,M^{\star}M\,\cl A_{0}]}=\overline{[M^{\star}\,M\,M^{\star}\,M\,a(\cl A)\,M^{\star}\,M]}=\overline{[M^{\star}\,M\,a(\cl A)\,M^{\star}\,M]}=\cl A_{0}$$
$$\overline{[\cl A_{0}\,M^{\star}\,M]}=\overline{[M^{\star}\,M\,a(\cl A)\,M^{\star}\,M\,M^{\star}\,M]}=\overline{[M^{\star}\,M\,a(\cl A)\,M^{\star}\,M]}=\cl A_{0}$$
which means that $\overline{[M^{\star}\,M\,\cl A_{0}]}=\overline{[\cl A_{0}\,M^{\star}\,M]}$, that is, $Z=\overline{[M\,\cl A_{0}]}$ is a doubly $\sigma \Delta$-$\cl A_{0}$-rigged module.
If we define $\cl B=\overline{[M\,a(\cl A)\,M^{\star}]}$ then by Lemma \ref{cai}, $\cl A_{0}$ and $\cl B$ have cai's and by Lemma \ref{useful} $\cl A_{0}\sim_{\sigma TRO}\cl B.$

Finally, $\cl B\sim_{\sigma TRO}Z$.
Indeed, if we consider the $\sigma$-TRO's $M_1=M$ and $M_2=\overline{[M\,M^{\star}]}$, then $$\overline{[M_2\,Z\,M_1^{\star}]}=\overline{[M\,M^{\star}\,M\,a(\cl A)\,M^{\star}\,M\,M^{\star}]}=\overline{[M\,\cl A_{0}\,M^{\star}]}=\cl B$$
$$\overline{[M_2^{\star}\,\cl B\,M_1]}=\overline{[M\,M^{\star}\,M\,a(\cl A)\,M^{\star}\,M]}=\overline{[M\,M^{\star}\,\cl B\,M]}=\overline{[M\,\cl A_{0}]}=Z.$$

\end{proof}

\begin{corollary}

Every $\sigma \Delta$-$\cl A$-rigged-module $Y$ over an approximately unital operator algebra $\cl A$  has a restriction which is a bimodule of BMP equivalence,
which actually implements a stable isomorphism over the operator algebras $\cl A_{0}$ and $\cl B$ defined as in Theorem \ref{restr-doubly}.

\end{corollary}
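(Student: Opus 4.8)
The plan is to unwind the definitions so that this corollary becomes a direct consequence of Theorem \ref{restr-doubly} together with the machinery of Theorem \ref{TRO}. Start from a $\sigma\Delta$-$\cl A$-rigged module $Y$; by Definition \ref{abstract-rigged} there are a completely isometric homomorphism $a:\cl A\to a(\cl A)\subseteq\mathbb B(H)$, a $\sigma$-TRO $M\subseteq\mathbb B(H,K)$ with $M^\star M\,a(\cl A)\subseteq a(\cl A)$, and a complete surjective isometry $Y\cong Y_0=\overline{[M\,a(\cl A)]}$ that is a right $\cl A$-module map. Apply Theorem \ref{restr-doubly} to produce the operator algebras $\cl A_0=\overline{[M^\star M\,a(\cl A)\,M^\star M]}$ and $\cl B=\overline{[M\,a(\cl A)\,M^\star]}$, both with cai's, and the restriction $Z=\overline{[Y\,M^\star M]}=\overline{[M\,\cl A_0]}$, which that theorem shows is a doubly $\sigma\Delta$-$\cl A_0$-rigged module with $\cl A_0\sim_{\sigma TRO}\cl B\sim_{\sigma TRO}Z$.

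Next I would verify that $Z$ is a bimodule of BMP equivalence in the sense of Definition \ref{B-M-P-d}. The natural left action is by $\cl B$: indeed $\cl B\,Z=\overline{[M\,a(\cl A)\,M^\star\,M\,\cl A_0]}\subseteq\overline{[M\,\cl A_0]}=Z$, so $Z$ is a $\cl B$-$\cl A_0$-bimodule. The counterpart bimodule should be $\tilde Z=\overline{[M^\star\,\cl B]}=\overline{[M^\star M\,a(\cl A)\,M^\star]}$, a $\cl A_0$-$\cl B$-bimodule. One then has to identify the balanced Haagerup tensor products: $Z\otimes^h_{\cl A_0}\tilde Z\cong\cl B$ and $\tilde Z\otimes^h_{\cl B}Z\cong\cl A_0$. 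The $\sigma$-TRO relations $\cl A_0\sim_{\sigma TRO}\cl B$ from Lemma \ref{useful} give the spatial identities $\overline{[M\,\cl A_0\,M^\star]}=\cl B$ and $\overline{[M^\star\,\cl B\,M]}=\cl A_0$, and the standard fact that for $\sigma$-TRO (equivalently, strong Morita) situations the balanced Haagerup tensor product over the relevant algebra is completely isometrically isomorphic to the corresponding module product of operators — this is exactly the kind of identification underlying Lemma \ref{a lot} and the BMP framework of \cite{BMP00} — yields the two isomorphisms. That the resulting equivalence implements a stable isomorphism of $\cl A_0$ and $\cl B$ is then immediate: $\cl A_0\sim_{\sigma TRO}\cl B$ forces $\cl A_0\sim_{\sigma\Delta}\cl B$, which is the same as stable isomorphism $K_\infty(\cl A_0)\cong K_\infty(\cl B)$ by the characterization quoted just before Lemma \ref{a lot}.

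The step I expect to require the most care is the completely isometric identification of the balanced Haagerup tensor products $Z\otimes^h_{\cl A_0}\tilde Z$ and $\tilde Z\otimes^h_{\cl B}Z$ with $\cl B$ and $\cl A_0$ respectively — i.e.\ checking that the obvious concrete multiplication maps descend to the balanced tensor product and are complete isometries there, using the module-essentiality coming from the cai's of $\cl A_0$ and $\cl B$ and the fact that $\overline{[M^\star M]}$ acts nondegenerately on $\cl A_0$ on both sides (which is precisely the doubly-rigged condition $\overline{[M^\star M\,\cl A_0]}=\overline{[\cl A_0\,M^\star M]}=\cl A_0$ established in Theorem \ref{restr-doubly}). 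Everything else is a matter of assembling results already in hand: Theorem \ref{restr-doubly} supplies the algebras and the restriction, Lemma \ref{useful} supplies the $\sigma$-TRO equivalence and the cai's, and Definition \ref{eq} together with the quoted equivalence of $\sigma\Delta$-equivalence and stable isomorphism supplies the final conclusion.
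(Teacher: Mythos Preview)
Your proposal is correct and follows the same route the paper intends: everything is pulled from Theorem \ref{restr-doubly}, and the stable isomorphism is read off from $\cl A_0\sim_{\sigma TRO}\cl B$ via the $\sigma\Delta$/stable-isomorphism equivalence. Note that the paper gives no proof for this corollary at all --- it treats it as immediate from Theorem \ref{restr-doubly} --- so your write-up is in fact more detailed than the paper's. The extra content you supply, namely the explicit identification of the counterpart bimodule $\tilde Z=\overline{[M^\star\cl B]}$ and the verification of $Z\otimes^h_{\cl A_0}\tilde Z\cong\cl B$ and $\tilde Z\otimes^h_{\cl B}Z\cong\cl A_0$, is exactly the computation the paper carries out later (in the proofs of Theorem \ref{main} and the first theorem of Section \ref{444}) using Lemma \ref{a lot}; your instinct that this is the step requiring care is right, and your pointer to the nondegeneracy $\overline{[M^\star M\,\cl A_0]}=\overline{[\cl A_0\,M^\star M]}=\cl A_0$ is the correct hypothesis to invoke.
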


\begin{corollary}

Every orthogonally complemented module over an approximately unital operator algebra $\cl A$  has a restriction which is a bimodule of BMP equivalence between operator algebras which are stably isomorphic.

\end{corollary}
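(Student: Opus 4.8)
The plan is to deduce this at once from Theorem~\ref{orthog} and the corollary immediately preceding it.

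First, if $Y$ is orthogonally complemented in $C_\infty(\cl A)$, then by Theorem~\ref{orthog} $Y$ is, up to a complete surjective isometry that is a right $\cl A$-module map, a $\sigma\Delta$-$\cl A$-rigged module. Concretely one obtains a completely isometric homomorphism $a:\cl A\to a(\cl A)\subseteq\mathbb B(H)$ and a $\sigma$-TRO $M\subseteq\mathbb B(H,K)$ with $M^\star M\,a(\cl A)\subseteq a(\cl A)$ and $Y\cong\overline{[M\,a(\cl A)]}$. This is the only place the orthogonal-complementation hypothesis is used.

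Second, I would apply the preceding corollary to this $\sigma\Delta$-$\cl A$-rigged module $Y$. It yields a restriction $Z$ of $Y$ (in the sense of Definition~\ref{restriction}) which is a bimodule of BMP equivalence for the operator algebras $\cl A_0=\overline{[M^\star M\,a(\cl A)\,M^\star M]}$ and $\cl B=\overline{[M\,a(\cl A)\,M^\star]}$ of Theorem~\ref{restr-doubly}, and which implements a stable isomorphism over them; by Theorem~\ref{restr-doubly} these algebras have contractive approximate identities and $\cl A_0\sim_{\sigma TRO}\cl B$. In particular $\cl A_0$ and $\cl B$ are stably isomorphic, which is exactly what is claimed. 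If one prefers to see the stable isomorphism directly: $\cl A_0\sim_{\sigma TRO}\cl B$ gives $\cl A_0\sim_{\sigma\Delta}\cl B$, and by the identification recorded after Lemma~\ref{a lot} this means $K_\infty(\cl A_0)\cong K_\infty(\cl B)$.

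I do not expect any real difficulty: the statement is a bookkeeping combination of Theorem~\ref{orthog} and the preceding corollary. The only point to watch is that a single presentation $Y\cong\overline{[M\,a(\cl A)]}$ be fixed and used throughout, so that the algebras $\cl A_0,\cl B$ and the restriction $Z$ produced by Theorem~\ref{restr-doubly} are literally the objects appearing in the preceding corollary; once that is in place, the conclusion follows verbatim.
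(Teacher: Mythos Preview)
Your proposal is correct and follows exactly the paper's own proof: invoke Theorem~\ref{orthog} to see that an orthogonally complemented module is a $\sigma\Delta$-$\cl A$-rigged module, then apply the preceding corollary. The paper's proof is in fact just those two sentences, without the extra unpacking of $\cl A_0$, $\cl B$, and $Z$ that you provide.
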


\begin{proof}

If $Y$ is an orthogonally complemented module over the operator algebra $\cl A$, then according to Theorem \ref{orthog}, $Y$ is a $\sigma \Delta$-$\cl A$-rigged module and due to the previous corollary, $Y$ has a restriction which is a bimodule of BMP equivalence between operator algebras which are stably isomorphic.

\end{proof}

Another interesting category of rigged modules is the category of column stable generator modules.
We prove that the restriction of a $\sigma \Delta$-rigged module over $\cl A$ is a column stable generated module (maybe over another operator algebra than $\cl A$).
We refer the reader to \cite[Section 8]{Ble-Gen} for facts about column stable generated modules.

\begin{definition}, \cite{Ble-Gen}.

A right $\cl A$-rigged module $Y$ is called a column stable generator (CSG for short) if there exist completely contractive right $\cl A$-module maps $\sigma:\cl A\to C_{\infty}(Y)$ and $\tau:C_{\infty}(Y)\to \cl A$ such that $\tau\circ \sigma=Id_{\cl A}.$

\end{definition}

\begin{proposition}

Let $\cl A$ be an approximately unital operator algebra, $a:\cl A\to a(\cl A)\subseteq \mathbb{B}(H)$ be a completely isometric homomorphism, and suppose there is a $\sigma$-TRO $M\subseteq \mathbb{B}(H,K)$ such that $$M^{\star}\,M\,a(\cl A)\subseteq a(\cl A), \,\,a(\cl A)\,M^{\star}\,M\subseteq a(\cl A).$$ Consider the $\sigma \Delta$-$\cl A$-rigged module $Y=\overline{[M\,a(\cl A)]}$.
Then, there exist operator algerbas $\cl A_{0}$ and $
\cl B$ and a restriction $Z$ of $\,Y$ over $\cl A_{0}$ such that $Z$ is a CSG module over $\cl A_{0}.$

\end{proposition}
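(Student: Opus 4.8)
The plan is to keep the algebras $\cl A_{0}$, $\cl B$ and the restriction $Z$ furnished by Theorem \ref{restr-doubly}, and then to exhibit directly the pair of maps witnessing the CSG property for $Z$ over $\cl A_{0}$. Recall from that theorem that $\cl A_{0}=\overline{[M^{\star}\,M\,a(\cl A)\,M^{\star}\,M]}$ and $\cl B=\overline{[M\,a(\cl A)\,M^{\star}]}$ are operator algebras with cai's, that the restriction is $Z=\overline{[Y\,M^{\star}\,M]}=\overline{[M\,a(\cl A)\,M^{\star}\,M]}$, and that $Z=\overline{[M\,\cl A_{0}]}$ is a doubly $\sigma\Delta$-$\cl A_{0}$-rigged module; in particular $M\,\cl A_{0}\subseteq Z$, $\,M^{\star}\,Z\subseteq \cl A_{0}$, $\,\overline{[M^{\star}\,M\,\cl A_{0}]}=\cl A_{0}$, and $Z$ is a right $\cl A_{0}$-rigged module in the sense of Definition \ref{rigged} by Proposition \ref{basic-rigged}, so the notion of a CSG module applies to it. Since $M$ is a $\sigma$-TRO I would fix a sequence $(n_{j})_{j\in\bb N}\subseteq M$ with $\sum_{j=1}^{t}m\,n_{j}^{\star}\,n_{j}\to m$ in norm for every $m\in M$ and $\nor{\sum_{j=1}^{t}n_{j}^{\star}\,n_{j}}\leq 1$ for all $t$.

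Next I would define $\sigma:\cl A_{0}\to C_{\infty}(Z)$ by $\sigma(x_{0})=(n_{j}\,x_{0})_{j\in\bb N}$ and $\tau:C_{\infty}(Z)\to \cl A_{0}$ by $\tau\big((z_{j})_{j}\big)=\sum_{j}n_{j}^{\star}\,z_{j}$. For $\sigma$ one notes that $n_{j}\,x_{0}\in M\,\cl A_{0}\subseteq Z$ and that $\sum_{j=1}^{N}x_{0}^{\star}\,n_{j}^{\star}\,n_{j}\,x_{0}=x_{0}^{\star}\big(\sum_{j=1}^{N}n_{j}^{\star}\,n_{j}\big)x_{0}\leq x_{0}^{\star}\,x_{0}$, so the column converges and $\sigma(x_{0})\in C_{\infty}(Z)$ with $\nor{\sigma(x_{0})}\leq \nor{x_{0}}$; indeed $\sigma$ is left multiplication by the contractive column $(n_{j})_{j}$, hence completely contractive, and it is plainly a right $\cl A_{0}$-module map. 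For $\tau$ one notes that $n_{j}^{\star}\,z_{j}\in M^{\star}\,Z\subseteq \cl A_{0}$, while the Cauchy--Schwarz estimate $\nor{\sum_{j=m}^{n}n_{j}^{\star}\,z_{j}}\leq \nor{\sum_{j=m}^{n}n_{j}^{\star}\,n_{j}}^{1/2}\nor{\sum_{j=m}^{n}z_{j}^{\star}\,z_{j}}^{1/2}$ shows the series converges in norm to an element of $\cl A_{0}$ with $\nor{\tau((z_{j}))}\leq \nor{(z_{j})}$; again $\tau$ is left multiplication by the contractive row $(n_{j}^{\star})_{j}$, hence completely contractive, and a right $\cl A_{0}$-module map. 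The complete contractivity of both maps is then the obvious amplification, since each is implemented by left multiplication by a fixed contraction.

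Finally I would verify $\tau\circ\sigma=\mathrm{Id}_{\cl A_{0}}$. Since $\tau(\sigma(x_{0}))=\sum_{j}n_{j}^{\star}\,n_{j}\,x_{0}$, it suffices to show $\sum_{j=1}^{N}n_{j}^{\star}\,n_{j}\,x_{0}\to x_{0}$ in norm for every $x_{0}\in\cl A_{0}$. On a generator $x_{0}=p^{\star}\,q\,a(a)\,r^{\star}\,s$ of $\cl A_{0}$ (with $p,q,r,s\in M$, $a\in\cl A$), taking adjoints in the relation $\sum_{j=1}^{N}p\,n_{j}^{\star}\,n_{j}\to p$ gives $\sum_{j=1}^{N}n_{j}^{\star}\,n_{j}\,p^{\star}\to p^{\star}$ in norm, and multiplying on the right by the fixed bounded operator $q\,a(a)\,r^{\star}\,s$ yields $\sum_{j=1}^{N}n_{j}^{\star}\,n_{j}\,x_{0}\to x_{0}$; the general case follows by an $\eps/3$ argument using $\nor{\sum_{j=1}^{N}n_{j}^{\star}\,n_{j}}\leq 1$. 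Hence $\sigma$ and $\tau$ realise $Z$ as a CSG module over $\cl A_{0}$, which completes the proof.

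I expect the only genuinely delicate point to be this last step, namely that $\sum_{j}n_{j}^{\star}\,n_{j}$ acts as a left identity on $\cl A_{0}$: this is precisely where one uses that every element of $\cl A_{0}$ carries a factor from $M^{\star}$ on the left together with the adjointed right-quasibasis relation of the $\sigma$-TRO $M$. The remaining care is bookkeeping --- distinguishing norm from strong convergence of $\sum_{j}n_{j}^{\star}\,n_{j}$, and checking that $(n_{j}\,x_{0})_{j}$ genuinely defines an element of $C_{\infty}(Z)$ --- while the complete contractivity of $\sigma$ and $\tau$ is routine once they are recognised as left multiplication by a contractive column, resp.\ row, of elements of $M$.
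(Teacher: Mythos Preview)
Your proof is correct and follows essentially the same route as the paper: both define $\sigma(x_{0})=(n_{j}\,x_{0})_{j}$ and $\tau((z_{j}))=\sum_{j}n_{j}^{\star}\,z_{j}$ using the right-quasibasis sequence of the $\sigma$-TRO, and verify $\tau\circ\sigma=\mathrm{Id}_{\cl A_{0}}$ on generators of $\cl A_{0}=\overline{[M^{\star}\,\cl B\,M]}$ via the identity $\sum_{j}n_{j}^{\star}\,n_{j}\,m^{\star}\to m^{\star}$. You supply more detail on well-definedness, convergence and complete contractivity than the paper does, but the argument is the same.
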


\begin{proof}

Since $M$ is a $\sigma$-TRO, we fix a sequence $\left\{m_i\in M: i\in\mathbb{N}\right\}\subseteq M$ such that $$\nor{\sum_{i=1}^n m_i^{\star}\,m_i}\leq 1\,,\forall\,n\in\mathbb{N}\,\,\,,\sum_{i=1}^{\infty}m_i^{\star}\,m_i\,m^{\star}=m^{\star}\,,\forall\,m\in M\,\,(I).$$
We define the operator algebras $\cl B=\overline{[M\,a(\cl A)\,M^{\star}]}\subseteq \mathbb{B}(K)\,,\cl A_{0}=\overline{[M^{\star}\,\cl B\,M]}\subseteq \mathbb{B}(H)$ and also $Z=\overline{[Y\,M^{\star}\,M]}=\overline{[\cl B\,M]}$, which is a restriction of $Y$, and is also a doubly $\sigma \Delta$-$\cl A_{0}$-rigged module (Theorem \ref{restr-doubly} above).
Since $$\overline{[M\,\cl A_{0}]}= \overline{[M\,M^{\star}\,\cl B\,M]}=\overline{[M\,M^{\star}\,M\,a(\cl A)\,M^{\star}\,M]}=\overline{[M\,a(\cl A)\,M^{\star}\,M]}=\overline{[\cl B\,M]}=Z$$
and $\overline{[M^{\star}\,Z]}= \overline{[M^{\star}\,\cl B\,M]}=\cl A_{0}$, the maps 
$$\sigma:\cl A_{0}\to C_{\infty}(Z)\,,\sigma(a)=(m_i\,a)_{i\in\mathbb{N}}$$ and $$\tau:C_{\infty}(Z)\to \cl A_{0}\,,\tau((z_i)_{i\in\mathbb{N}})=\sum_{i=1}^{\infty}m_i^{\star}\,z_i$$ 
are well defined and also completely contractive right $\cl A_{0}$-module maps.
For all $m^{\star}\,b\,n\in M^{\star}\,\cl B\,M\subseteq \cl A_{0}$ we have that $$(\tau\circ \sigma)(m^{\star}\,b\,n)=\tau((m_i\,m^{\star}\,b\,n)_{i\in\mathbb{N}})=\sum_{i=1}^{\infty}m_i^{\star}\,m_i\,m^{\star}\,b\,n\stackrel{(I)}{=}m^{\star}\,b\,n=Id_{\cl A_{0}}(m^{\star}\,b\,n).$$
It follows that $(\tau\circ \sigma)(a)=Id_{\cl A_{0}}(a)\,,\forall\,a\in \cl A_{0}\implies \tau\circ \sigma=Id_{\cl A_{0}}.$

\end{proof}

\begin{theorem}
\label{main}

Let $\cl A\,,\cl B$ be approximately unital operator algebras such that $\cl A\,,\cl B$ are stably isomorphic.
Then, there exists a doubly $\sigma \Delta$-$\cl A$-rigged module $Y$ which is also a $\cl A-\cl B$-operator module and there exists a $\cl B$-$\cl A$-operator module $X$ such that $\cl B\cong Y\otimes_{\cl A}^h X$ and $\cl A\cong X\otimes_{\cl B}^h Y.$ Furthermore, $\cl A\,,\cl B\,,X\,,Y$ are all stably isomorphic.

\end{theorem}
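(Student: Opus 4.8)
The plan is to build the required bimodules concretely from a $\sigma$-TRO witnessing the stable isomorphism, and then verify the balanced Haagerup tensor product identities by the kind of $\sigma$-TRO bookkeeping used repeatedly in Theorems \ref{TRO} and \ref{restr-doubly}. First I would use the hypothesis $\cl A\sim_{\sigma\Delta}\cl B$: since stable isomorphism of operator algebras is equivalent to $\sigma\Delta$ equivalence (as recalled after Lemma \ref{a lot}), there are completely isometric representations $\cl A\subseteq\mathbb B(H)$, $\cl B\subseteq\mathbb B(K)$ and a $\sigma$-TRO $M\subseteq\mathbb B(H,K)$ with $\cl A=\overline{[M^{\star}\cl B M]}$ and $\cl B=\overline{[M\cl A M^{\star}]}$. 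From these two relations one extracts $M^{\star}M\cl A\subseteq\overline{[M^{\star}M M^{\star}\cl B M]}=\overline{[M^{\star}\cl B M]}=\cl A$, and symmetrically $\cl A M^{\star}M\subseteq\cl A$, together with $MM^{\star}\cl B\subseteq\cl B$ and $\cl B MM^{\star}\subseteq\cl B$. Moreover $\overline{[M^{\star}M\cl A]}=\overline{[M^{\star}M M^{\star}\cl B M]}=\overline{[M^{\star}\cl B M]}=\cl A=\overline{[\cl A M^{\star}M]}$, so the doubly-$\sigma$-TRO condition of Definition \ref{doubly} holds. Hence $Y:=\overline{[M\cl A]}\subseteq\mathbb B(H,K)$ is a doubly $\sigma$-TRO-$\cl A$-rigged module; note $Y=\overline{[M\cl A]}=\overline{[M M^{\star}\cl B M]}=\overline{[\cl B M]}$, so $Y$ is also a left $\cl B$-module and thus a $\cl B$--$\cl A$-operator bimodule. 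I set $X:=\overline{[M^{\star}\cl B]}=\overline{[\cl A M^{\star}]}\subseteq\mathbb B(K,H)$, a $\cl A$--$\cl B$-operator bimodule; abstractly $Y$ is also a right $\cl B$-... wait, I want $Y$ a right $\cl A$-module and $\cl A$--$\cl B$-bimodule as the statement asks, so I relabel: take $Y=\overline{[M\cl A]}$ with its right $\cl A$-action, and observe $Y$ carries the left $\cl A$-action via $\cl A\hookrightarrow\cl B$ through $M$—more precisely $\cl B\cong\overline{[M\cl A M^{\star}]}$ and left multiplication by $\cl B$ on $Y=\overline{[\cl B M]}$ realizes $Y$ as a left $\cl B$-module; the ``$\cl A$--$\cl B$'' phrasing in the theorem is a typo for ``$\cl B$--$\cl A$'' and I would prove the $\cl B$--$\cl A$ statement.

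Next I would establish the two tensor-product isomorphisms. For $X\otimes_{\cl B}^h Y\cong\cl A$: the concrete module map $x\otimes y\mapsto xy$ from the balanced Haagerup tensor product into $\mathbb B(H)$ has range $\overline{[X Y]}=\overline{[M^{\star}\cl B\cdot\cl B M]}=\overline{[M^{\star}\cl B M]}=\cl A$, and I would invoke the standard fact (as in \cite{BMP00}, and used implicitly in Lemma \ref{a lot}) that for $\sigma$-TRO-generated modules over $C^{\star}$-subalgebras with the appropriate nondegeneracy this multiplication map is a complete isometry onto its closed range. Concretely, writing $C=\overline{[M^{\star}M]}$ and $D=\overline{[MM^{\star}]}$, one has $X\otimes_{\cl B}^h Y\cong M^{\star}\otimes_D^h\cl B\otimes_D^h M$ (using $X=\overline{[M^{\star}\cl B]}=\overline{[M^{\star}D\cl B]}$, $Y=\overline{[\cl B M]}=\overline{[\cl B D M]}$ and that $\cl B$ is nondegenerate over $D$), and Lemma \ref{a lot}'s proof-technique identifies this with $\cl A$. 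Symmetrically $Y\otimes_{\cl A}^h X\cong M\otimes_C^h\cl A\otimes_C^h M^{\star}\cong\overline{[M\cl A M^{\star}]}=\cl B$. This simultaneously shows, via Definition \ref{B-M-P-d}, that $X,Y$ are bimodules of BMP-Morita equivalence for $\cl B$ and $\cl A$.

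Finally, the stable-isomorphism assertion: $\cl A\sim_{\sigma\Delta}\cl B$ is given, and $Y\sim_{\sigma TRO}\cl B$ follows exactly as in the last paragraph of the proof of Theorem \ref{TRO} (with $M_1=M^{\star}$, $M_2=\overline{[MM^{\star}]}$, using relation $\overline{[M\cl A M^{\star}M]}=\overline{[MM^{\star}M\cl A]}$), so $Y\sim_{\sigma\Delta}\cl B$, hence $Y$ is stably isomorphic to $\cl B$ and thus to $\cl A$; the symmetric computation gives $X\sim_{\sigma TRO}\cl A$. Therefore $\cl A,\cl B,X,Y$ are pairwise stably isomorphic. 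The main obstacle I anticipate is the identification $X\otimes_{\cl B}^h Y\cong\cl A$ at the level of the balanced Haagerup tensor product: one must check that the natural multiplication map is not merely a contraction but a complete isometry, which requires the nondegeneracy of $\cl A$ and $\cl B$ as modules over the $C^{\star}$-algebras $C=\overline{[M^{\star}M]}$, $D=\overline{[MM^{\star}]}$ (these having $\sigma$-units because $M$ is a $\sigma$-TRO) together with the module-map property of $\phi$; this is precisely the situation handled by Lemma \ref{a lot}, so the cleanest route is to reduce both tensor identities to that lemma rather than recomputing the Haagerup norm by hand.
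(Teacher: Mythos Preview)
Your approach is essentially identical to the paper's: obtain the $\sigma$-TRO $M$ from $\sigma\Delta$-equivalence, set $Y=\overline{[M\cl A]}$ and $X=\overline{[\cl A M^{\star}]}=\overline{[M^{\star}\cl B]}$, verify the doubly condition $\overline{[M^{\star}M\cl A]}=\cl A=\overline{[\cl A M^{\star}M]}$, and reduce both balanced Haagerup tensor identities to the computation $M\otimes_{D}^{h}(\cdot)\otimes_{D}^{h}M^{\star}\cong\overline{[M(\cdot)M^{\star}]}$ underlying Lemma \ref{a lot}. You are also right that the bimodule labeling in the statement is swapped (the paper's own proof makes $Y$ a $\cl B$--$\cl A$ and $X$ an $\cl A$--$\cl B$ bimodule), and your explicit check of the final stable-isomorphism claim via the $\sigma$-TRO argument of Theorem \ref{TRO} is in fact more complete than the paper's proof, which stops after the tensor computations.
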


\begin{proof}
Since $\cl A$ and $\cl B$ are stably isomorphic, we have that they are also $\sigma \Delta$ equivalent, that is, $\cl A\sim_{\sigma \Delta}\cl B$ (according to \cite[Theorem 3.3]{Elest}).
So, there exist Hilbert spaces $H\,,K$ and completely isometric homomorphisms $a:\cl A\to \mathbb{B}(H)$ and $\beta:\cl B\to \mathbb{B}(K)$ and also a $\sigma$-TRO $M\subseteq \mathbb{B}(H,K)$ such that $$a(\cl A)=\overline{[M^{\star}\,\beta(\cl B)\,M]}\,\,,\beta(\cl B)=\overline{[M\,a(\cl A)\,M^{\star}]}.$$
We have that $$\overline{[a(\cl A)\,M^{\star}\,M]}=a(\cl A)=\overline{[M^{\star}\,M\,a(\cl A)]},$$
and so $Y=\overline{[M\,a(\cl A)]}\subseteq \mathbb{B}(H,K)$ is a doubly $\sigma \Delta$-$\cl A$-rigged module which is also a left $\cl B$-operator module since $$\beta(\cl B)\,Y\subseteq \overline{[M\,a(\cl A)\,M^{\star}\,M\,a(\cl A)]}\subseteq \overline{[M\,a(\cl A)\,a(\cl A)]}\subseteq \overline{[M\,a(\cl A)]}=Y.$$ 
We also define $X=\overline{[a(\cl A)\,M^{\star}]}\subseteq \mathbb{B}(K,H)$ which is a left $\cl A$-operator module via the module action $$a(x)\cdot (a(y)\,m^{\star})=a(x\,y)\,m^{\star}\,,x\,,y\in\cl A\,,m\in M.$$ Furthermore $X$ is a right $\cl B$-operator module since $$X\,\beta(\cl B)\subseteq \overline{[a(\cl A)\,M^{\star}\,M\,a(\cl A)\,M^{\star}]}=\overline{[a(\cl A)\,a(\cl A)\,M^{\star}\,M\,M^{\star}]}\subseteq \overline{[a(\cl A)\,M^{\star}]}=X.$$
By Lemma \ref{a lot}, if $D_1=\overline{[M^{\star}\,M]}$, then
    \begin{align*} Y\otimes_{a(\cl A)}^h X&=\overline{[M\,a(\cl A)]}\otimes_{a(\cl A)}^h \overline{[a(\cl A)\,M^{\star}]}\\&\cong \left(M\otimes_{D_1}^h a(\cl A)\right)\otimes_{a(\cl A)}^h \left(a(\cl A)\otimes_{D_1}^h M^{\star}\right)\\&\cong M\otimes_{D_1}^h a(\cl A)\otimes_{D_1}^h M^{\star}\\&\stackrel{(1.1)}{\cong}\overline{[M\,a(\cl A)\,M^{\star}]}=\beta(\cl B)
\end{align*}
and also, due to the fact that $Y=\overline{[a(\cl A)\,M^{\star}]}=\overline{[M^{\star}\,\beta(\cl B)]}$ if $D_2=\overline{[M\,M^{\star}]}$, we have
\begin{align*}
    X\otimes_{\beta(\cl B)}^h Y&=\overline{[M^{\star}\,\beta(\cl B)]}\otimes_{\beta(\cl B)}^h \overline{[M\,M^{\star}\,\beta(\cl B)\,M]}\\&\cong \left(M^{\star}\otimes_{D_2}^h \beta(\cl B)\right)\otimes_{\beta(\cl B)}^h \overline{[M\,a(\cl A)]}\\&\cong M^{\star}\otimes_{D_2}^h \left(\beta(\cl B)\otimes_{\beta(\cl B)}^h \overline{[M\,a(\cl A)]}\right)\\&\cong M^{\star}\otimes_{D_2}^h \overline{[M\,a(\cl A)]}\\&\cong M^{\star}\otimes_{D_2}^h\left(M\otimes_{D_1}^h a(\cl A)\right)\\&\cong \left(M^{\star}\otimes_{D_2}^h M\right)\otimes_{D_1}^h a(\cl A)\\&\cong \overline{[M^{\star}\,M]}\otimes_{D_1}^h a(\cl A)\\&\cong \overline{[M^{\star}\,M\,a(\cl A)]}\\&=\overline{[M^{\star}\,M\,M^{\star}\,\beta(\cl B)\,M]}\\&=\overline{[M^{\star}\,\beta(\cl B)\,M]}=a(\cl A).
\end{align*}

\end{proof}

By the same arguments we obtain the following corollary:

\begin{corollary} Let $\cl A, \cl B$  be stably isomorphic $C^*-$algebras. There exists a $\sigma\Delta-$Hilbert module $Y$ over a $C^*-$algebra $\cl D$ such that $$\cl A\simeq K_{\cl D}(Y), \cl B\simeq I_{\cl D}(Y).$$ Furthermore $\cl A, \cl B$ and $Y$ are all stably isomorphic.

\end{corollary}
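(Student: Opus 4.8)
The plan is to deduce this $C^*$-algebraic corollary directly from Theorem \ref{main} and Theorem \ref{TRO}, simply by observing that in the selfadjoint setting all the operator-module constructions become the familiar Hilbert-module constructions. So first I would start from stably isomorphic $C^*$-algebras $\cl A$ and $\cl B$ and apply Theorem \ref{main}: this produces a doubly $\sigma\Delta$-$\cl A$-rigged module $Y$ which is simultaneously an $\cl A$-$\cl B$-operator module, together with a $\cl B$-$\cl A$-operator module $X$, with $\cl B\cong Y\otimes_{\cl A}^h X$ and $\cl A\cong X\otimes_{\cl B}^h Y$, and with $\cl A,\cl B,X,Y$ all stably isomorphic. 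Concretely, from the proof of Theorem \ref{main} I have completely isometric representations $a:\cl A\to\mathbb B(H)$, $\beta:\cl B\to\mathbb B(K)$ and a $\sigma$-TRO $M\subseteq\mathbb B(H,K)$ with $a(\cl A)=\overline{[M^\star\beta(\cl B)M]}$, $\beta(\cl B)=\overline{[Ma(\cl A)M^\star]}$, and $Y=\overline{[Ma(\cl A)]}$.

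Next I would identify the right $\cl D$ over which $Y$ is to be viewed as a Hilbert module. Since $\cl A$ is a $C^*$-algebra, $a(\cl A)$ is selfadjoint, hence so is the $C^*$-algebra $\cl D:=\overline{[M^\star M\, a(\cl A)\, M^\star M]}=\cl A_0$ appearing in Theorem \ref{TRO} (equivalently, one can take $\cl D=a(\cl A)$ itself after noting $\cl D\cdot a(\cl A)=a(\cl A)$ because $M$ is a $\sigma$-TRO and $\overline{[M^\star M\,a(\cl A)]}=\overline{[a(\cl A)\,M^\star M]}$); in the $C^*$ case these equal $a(\cl A)$, so I may simply take $\cl D=\cl A$. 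Then $Y=\overline{[M\cl D]}$ is a $\sigma\Delta$-$\cl D$-Hilbert module in the sense of Definition \ref{abstract-rigged}, the inner product being $\langle y,z\rangle = y^\star z\in \cl D$, and the doubly condition holds automatically in the selfadjoint setting (Theorem \ref{main0}).

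Now I would invoke the last sentence of Theorem \ref{TRO}: when $\cl D$ is a $C^*$-algebra and $Y$ is a $\sigma\Delta$-$\cl D$-Hilbert module, the algebras produced there satisfy $\cl A_0\simeq I_{\cl D}(Y)$ and $\cl B\simeq \mathbb K_{\cl D}(Y)$. Tracing through the identifications, $I_{\cl D}(Y)=\overline{[Y^\star Y]}=\overline{[a(\cl A)M^\star M a(\cl A)]}=a(\cl A)=\cl A$ and $\mathbb K_{\cl D}(Y)=\overline{[YY^\star]}=\overline{[Ma(\cl A)M^\star]}=\beta(\cl B)=\cl B$. (Alternatively, this is exactly the content of $\cl B\cong Y\otimes_{\cl A}^h X$ and $\cl A\cong X\otimes_{\cl B}^h Y$ from Theorem \ref{main} once one recognizes that over $C^*$-algebras the balanced Haagerup tensor product $Y\otimes_{\cl A}^h \tilde Y$ computes $\mathbb K_{\cl A}(Y)$ and $\tilde Y\otimes_{\cl B}^h Y$ computes the ideal generated by the inner products — the $X$ of Theorem \ref{main} is, up to completely isometric identification, the conjugate module $\tilde Y=\overline{[a(\cl A)M^\star]}=Y^\star$.) This yields $\cl A\simeq \mathbb K_{\cl D}(Y)$ and $\cl B\simeq I_{\cl D}(Y)$ after possibly interchanging the roles of $H$ and $K$ (i.e. replacing $M$ by $M^\star$), which is the asserted form of the corollary.

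Finally, the stable-isomorphism statement is inherited verbatim from Theorem \ref{main}, which already asserts that $\cl A,\cl B,X,Y$ are all stably isomorphic; since $Y$ here is the same module, $\cl A$, $\cl B$, and $Y$ are all stably isomorphic. I expect the only real point requiring care — hence the main (mild) obstacle — is bookkeeping the identification of the abstract tensor-product algebras of Theorem \ref{main} with the concrete $C^*$-algebras $I_{\cl D}(Y)$ and $\mathbb K_{\cl D}(Y)$, and checking that $\tilde Y$ and the bimodule $X$ agree; once the conjugate module $Y^\star$ is put in place of $X$, everything is a routine application of the TRO calculus $\overline{[MM^\star M]}=M$ already used throughout Section \ref{222}, together with the fact that for a $\sigma$-TRO $M$ the $C^*$-algebras $\overline{[M^\star M]}$ and $\overline{[MM^\star]}$ act nondegenerately on the relevant modules.
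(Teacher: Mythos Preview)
Your proposal is correct and is exactly what the paper intends: the corollary is stated with the remark ``by the same arguments'' referring to Theorem \ref{main}, and you have simply unpacked those arguments in the $C^\star$-setting, invoking the last clause of Theorem \ref{TRO} to identify $\cl A_0$ and $\cl B$ with $I_{\cl D}(Y)$ and $\mathbb K_{\cl D}(Y)$. The only cosmetic point is the swap of labels (your computation gives $\cl A\simeq I_{\cl D}(Y)$ and $\cl B\simeq\mathbb K_{\cl D}(Y)$ rather than the stated assignment), which you correctly resolve by the symmetry of stable isomorphism.
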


\section{Morita equivalence of rigged modules}\label{444}

\begin{definition}, \cite{Ble-Gen}.
Let $\cl A$ be an approximately unital operator algebra and let $Y$ be a right $\cl A$-rigged module.
If $\left\{\Phi_{b}\,,\Psi_{b}: b\in B\right\}$ is a choice for $Y$ as in Definition \ref{rigged}, then we write $E_{b}$ for the map $E_{b}=\Psi_{b}\circ \Phi_{b}:Y\to Y\,,b\in B$.
We define $$\tilde{Y}=\left\{f\in CB_{\cl A}(Y,\cl A): f\circ E_{b}\to f\,\,\text{uniformly}\right\}$$ and $\mathbb{K}(Y)$ to be the closure in $CB_{\cl A}(Y,Y)$ of the set of finite rank operators $$T_{y,f}:Y\to Y\,,T_{y,f}(y')=y\,f(y')$$ where $y\in Y\,,f\in\tilde{Y}$.

\end{definition}

For further details we refer the reader to \cite{Ble-Gen}, Section 3.
We note that $\mathbb{K}(Y)$ and $\tilde{Y}$ are actually independent of the particular directed set and nets $\left\{\Phi_{b}\,,\Psi_{b}: b\in B\right\}$.
In the following lemma we use the notion of a complete quotient map.
For further details we refer the reader to \cite{BMP00}.

\begin{lemma}
\label{after}

Let $\cl A\subseteq \mathbb{B}(H)$ be an approximately unital operator algebra, $\,M\subseteq \mathbb{B}(H,K)$ be a $\sigma$-TRO and $Y=\overline{[M\,\cl A]}\subseteq \mathbb{B}(H,K)$.
Assume that $M^{\star}\,M\,\cl A\subseteq \cl A\,\,,\cl A\,M^{\star}\,M\subseteq \cl A$
(thus $Y$ is a $\sigma \Delta$-$\cl A$-rigged module). Then $\tilde{Y}\cong \overline{[\cl A\,M^{\star}]}$ and $\mathbb{K}(Y)\cong \overline{[M\,\cl A\,M^{\star}]}$.

\end{lemma}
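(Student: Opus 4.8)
The plan is to identify the two natural candidate maps explicitly and then verify they are complete isometries that are also module maps. For $\tilde Y$, the claim is that each element of $\overline{[\cl A\,M^\star]}$ acts on $Y=\overline{[M\,\cl A]}$ by left multiplication as an adjointable-type map into $\cl A$, and conversely every $f\in\tilde Y$ arises this way. First I would fix a sequence $\{m_i\in M\}$ realizing the $\sigma$-TRO structure, so that $\sum_i m_i m_i^\star\,m=m$ for all $m\in M$ with $\|\sum_{i=1}^n m_im_i^\star\|\le 1$, and use the maps $\Phi_n,\Psi_n$ from the proof of Proposition \ref{basic-rigged} as the canonical choice defining $E_n=\Psi_n\circ\Phi_n$. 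Note $E_n(y)=\sum_{i=1}^n m_im_i^\star y\to y$.

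Next I would define the map $\Theta:\overline{[\cl A\,M^\star]}\to CB_{\cl A}(Y,\cl A)$ by $\Theta(w)(y)=w\,y$ for $w\in\overline{[\cl A\,M^\star]}$, $y\in Y$; this lands in $\cl A$ because $\cl A\,M^\star\cdot M\,\cl A\subseteq \cl A\,M^\star M\,\cl A\subseteq\cl A$ using $\cl A M^\star M\subseteq\cl A$, and it is a right $\cl A$-module map. To see $\Theta(w)\in\tilde Y$ one computes $\Theta(w)\circ E_n(y)=w\,\sum_{i=1}^n m_im_i^\star y=(\sum_{i=1}^n w\,m_i m_i^\star)\,y$ and checks $\sum_{i=1}^n w\,m_i m_i^\star\to w$ uniformly, which follows from $w\in\overline{[\cl A M^\star]}$ and $\sum_i m_i m_i^\star m^\star\to m^\star$ together with the norm bound. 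For surjectivity of $\Theta$ onto $\tilde Y$: given $f\in\tilde Y$, set $w_n=\sum_{i=1}^n f(m_i)\,m_i^\star$ (noting $f(m_i)\in\cl A$ since $m_i\in M\cdot(\text{unit})\subseteq Y$ after adjoining a unit, or more carefully $m_i a\in Y$ and one passes to a limit over the cai); then $w_n\in\overline{[\cl A M^\star]}$, and $w_n\,y=\sum_{i=1}^n f(m_i)m_i^\star y=\sum_{i=1}^n f(m_i m_i^\star y)=f(E_n(y))\to f(y)$, so $(w_n)$ is Cauchy (using $f\circ E_b\to f$ uniformly and the structure) with limit $w$ satisfying $\Theta(w)=f$. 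The isometry statement: $\|\Theta(w)\|_{cb}=\|w\|$ because on one hand $\|w y\|\le\|w\|\|y\|$, and on the other hand $\|w\|=\|w\,\sum m_im_i^\star\|=\sup\|w E_n(\cdot)\|$ can be recovered by testing against $y=\sum m_i a$ with $\|(a_i)\|$ small — more cleanly, $w^\star w\in\overline{[M\cl A^\star\cl A M^\star]}$ and one uses that $Y^\star Y$ is weak-* dense in the relevant corner; I would instead argue $\|w\|^2=\|w M M^\star w^\star\|=\|(wM)(wM)^\star\|$ and $wM\subseteq \cl A M^\star M\subseteq\cl A\subseteq Y$-related space, giving the reverse bound. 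Matrix amplification is identical since everything is spatial.

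For $\mathbb K(Y)\cong\overline{[M\,\cl A\,M^\star]}$, I would define $\Lambda:\overline{[M\cl A M^\star]}\to CB_{\cl A}(Y,Y)$ by $\Lambda(T)(y)=T\,y$; this is valid because $M\cl A M^\star\cdot M\cl A\subseteq M\cl A\,(M^\star M)\,\cl A\subseteq M\cl A\cl A\subseteq M\cl A=Y$. The finite-rank generators match up: for $y=m a\in M\cl A\subseteq Y$ and $f=\Theta(w)$ with $w=b\,n^\star\in\cl A M^\star$, the operator $T_{y,f}(y')=y\,f(y')=m a\,b n^\star y'=(m a b n^\star)\,y'$, i.e. $\Lambda(mabn^\star)=T_{y,f}$, and $mabn^\star\in M\cl A M^\star$; conversely every element of $M\cl A M^\star$ is a norm-limit of such products, so $\Lambda$ maps the norm-closed span of finite-rank operators onto $\overline{[M\cl A M^\star]}$. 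That $\Lambda$ is a complete isometry follows as above: $\|T y\|\le\|T\|\|y\|$ and for the reverse, $T=T\sum_i m_i m_i^\star$ (since $T\in M\cl A M^\star$ and $\sum m_i m_i^\star m=m$), so $\|T\|=\sup_n\|\sum_{i\le n} T m_i m_i^\star\|$ which is controlled by the action on $Y$; amplify spatially for complete isometry.

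The main obstacle I expect is the surjectivity/well-definedness at the level of $\tilde Y$ — specifically, showing that an abstract $f\in\tilde Y$ (a priori only a completely bounded module map with the essentiality condition $f\circ E_b\to f$) is genuinely implemented by a single element $w\in\overline{[\cl A M^\star]}$, and that the partial sums $w_n=\sum_{i=1}^n f(m_i)m_i^\star$ converge in norm. The convergence hinges on uniform (not just pointwise) convergence $f\circ E_b\to f$ in Definition of $\tilde Y$ combined with the $\sigma$-TRO norm bounds $\|\sum_{i=1}^n m_i m_i^\star\|\le 1$, which should give $\|w_m-w_n\|=\|f\circ(E_m-E_n)\|\cdot(\text{bounded factor})\to 0$; making this estimate precise, and separately pinning down the reverse norm inequality $\|w\|\le\|\Theta(w)\|_{cb}$, are the two places requiring genuine care rather than routine bookkeeping. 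I would also need to double-check that $\tilde Y$ and $\mathbb K(Y)$ computed with the canonical choice $\{\Phi_n,\Psi_n\}$ agree with the definitions — but this is exactly the independence-of-choice fact quoted from \cite{Ble-Gen} right before the lemma, so it may be invoked directly.
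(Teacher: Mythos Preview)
Your approach is sound in outline but takes a genuinely different route from the paper. The paper does \emph{not} construct the isomorphisms by hand; instead it sets up the six-tuple ``Morita context'' data and invokes \cite[Theorem~5.1]{Ble-Gen} as a black box. Concretely, the paper defines $\cl B=\overline{[M\,\cl A\,M^{\star}]}$ and $X=\overline{[\cl A\,M^{\star}]}$, checks (via Lemmas~\ref{cai} and~\ref{useful}) that $\cl B$ has a cai, writes down the two bilinear pairings $(x,y)=xy\in\cl A$ and $[y,x]=yx\in\cl B$, verifies the associativity relations $(x,y)x'=x[y,x']$ and $y(x,y')=[y,x]y'$, and observes that $[\cdot,\cdot]$ induces a complete quotient map $Y\otimes^h X\to\cl B$ (because $Y\otimes_{\cl A}^h X\cong\cl B$ by the same Haagerup-tensor calculation as in Theorem~\ref{main}). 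Theorem~5.1 of \cite{Ble-Gen} then delivers $\tilde Y\cong X$ and $\mathbb K(Y)\cong\cl B$ directly.

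What each approach buys: the paper's route is short and avoids exactly the two technical points you flagged---it never needs to evaluate an abstract $f\in\tilde Y$ on an element $m_i$ that may not lie in $Y$, and it never needs the reverse norm inequality $\|w\|\le\|\Theta(w)\|_{cb}$ explicitly, since both are absorbed into Blecher's structural theorem. Your direct approach is more self-contained but, as you anticipated, the step ``$w_n=\sum_{i=1}^n f(m_i)\,m_i^{\star}$'' is problematic when $\cl A$ is non-unital: $m_i\notin Y$ in general, so $f(m_i)$ is not defined, and your parenthetical fix (``pass to a limit over the cai'') requires showing $\lim_\lambda f(m_i e_\lambda)$ exists in $\cl A$, which is not automatic for a merely completely bounded module map. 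This can be repaired (one shows first that $\Theta$ is completely isometric by the row argument $\|w\|=\|(wm_1,wm_2,\ldots)\|$ together with $\|(m_1e_\lambda,\ldots,m_ne_\lambda)\|\le 1$, and then uses that isometry plus the uniform convergence $f\circ E_n\to f$ to get Cauchyness of the sequence $f\circ E_n$ viewed back inside $\overline{[\cl A M^{\star}]}$), but the bookkeeping is nontrivial and the paper's appeal to \cite[Theorem~5.1]{Ble-Gen} sidesteps it entirely.
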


\begin{proof}

We define $\cl B=\overline{[M\,\cl A\,M^{\star}]}$.
Clearly, $\cl B$ is an operator algebra.
By Lemma \ref{cai}, $\cl A_{0}=\overline{[M^{\star}\,M\,\cl A\,M^{\star}\,M]}$ has a cai.
By Lemma \ref{useful} the algebra  $\cl B$ has also cai and obviously the algebras $\cl A_{0}$ and $ \cl B$ are $\sigma-TRO-$equivalent.
If $X=\overline{[\cl A\,M^{\star}]}$, then we define the completely contractive maps $$\left(\cdot\,,\cdot\right):X\times Y\to \cl A\,,(x,y)\mapsto (x,y)=x\,y$$ $$\left[\cdot\,,\cdot\right]:Y\times X\to \cl B\,,(y,x)\mapsto [y,x]=y\,x.$$
These maps satisfy $$(x,y)\,x'=x\,[y,x']\,\,,y\,(x,y')=[y,x]\,y'\,,\forall\,x\,,x'\in X\,,y\,,y'\in Y.$$
The map $\left[\cdot\,,\cdot\right]$ induces a complete quotient map $Y\otimes^h X\to \cl B\,,y\otimes x\to y\,x$.
Indeed, by making the same calculations as those of the proof of Theorem \ref{main}, we have that $Y\otimes_{\cl A}^h X\cong {[M\,\cl A\,M^{\star}]}=\cl B$.
Futhermore, the map $\phi: Y\otimes^h X\to Y\otimes_{\cl A}^h X\,,y\otimes x\mapsto y\otimes_{\cl A}x$ is a complete quotient since the map $\hat{\phi}:\left(Y\otimes^h X\right)/\rm{Ker}(\phi)\to Y\otimes_{\cl A}^h X$ is a complete surjective isometry.
By \cite[Theorem 5.1]{Ble-Gen}, $\tilde{Y}\cong \overline{[\cl A\,M^{\star}]}$ and $\mathbb{K}(Y)\cong \cl B=\overline{[M\,\cl A\,M^{\star}]}$.

\end{proof}

\begin{theorem}
If $\cl A$ is an approximately unital operator algebra and $Y$ is a doubly $\sigma \Delta$-$\cl A$-rigged module, then there exist approximately unital operator algebras $\cl A_{0}\subseteq \cl A$ and $\cl B$ such that:\\
$i)$ $\,\cl B\cong Y\otimes_{\cl A_{0}}^h \tilde{Y}$;\\
$ii)$ $\,\cl A_{0}\cong \tilde{Y}\otimes_{\cl B}^h Y$;\\
$iii)$ $\cl A_{0}\sim_{\sigma \Delta}\cl B\,\,,\cl A_{0}\sim_{\sigma \Delta}Y\,\,,Y\sim_{\sigma \Delta} \tilde{Y}$.
\end{theorem}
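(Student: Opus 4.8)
The plan is to reduce everything to the concrete picture provided by the definitions and by Theorem~\ref{restr-doubly} and Lemma~\ref{after}. First I would fix a completely isometric homomorphism $a:\cl A\to a(\cl A)\subseteq\mathbb B(H)$ and a $\sigma$-TRO $M\subseteq\mathbb B(H,K)$ with $M^{\star}MY\subseteq\cl A$ and $\overline{[M^{\star}M\,a(\cl A)]}=\overline{[a(\cl A)\,M^{\star}M]}$, together with a complete surjective isometry $\phi:Y\to Y_0=\overline{[M\,a(\cl A)]}$ that is a right $\cl A$-module map, exactly as in Definition~\ref{abstract-doubly}. Then I would set $\cl A_0=\overline{[M^{\star}M\,a(\cl A)\,M^{\star}M]}$ and $\cl B=\overline{[M\,a(\cl A)\,M^{\star}]}$, which by Lemma~\ref{cai} and Lemma~\ref{useful} are approximately unital operator algebras with $\cl A_0\sim_{\sigma TRO}\cl B$; note $\cl A_0\subseteq\cl A$ as required by the statement.

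The second step is to identify $Y$ with a doubly $\sigma\Delta$-$\cl A_0$-rigged module so that Lemma~\ref{after} applies with $\cl A_0$ in place of $\cl A$. Using the double-module hypothesis one checks $\overline{[M\,\cl A_0]}=\overline{[M\,a(\cl A)\,M^{\star}M]}$, and since $M^{\star}MY_0\subseteq Y_0$ together with $\overline{[M\,a(\cl A)\,M^{\star}M]}=\overline{[M\,M^{\star}M\,a(\cl A)]}=Y_0$, we get $Y_0=\overline{[M\,\cl A_0]}$; moreover $\overline{[M^{\star}M\,\cl A_0]}=\cl A_0=\overline{[\cl A_0\,M^{\star}M]}$, so $Y_0$ is a doubly $\sigma\Delta$-$\cl A_0$-rigged module in the concrete sense, and $\phi$ transports this structure to $Y$. (This is essentially the computation in the proof of Theorem~\ref{restr-doubly}, applied with the restriction $Z$ already equal to $Y$ because $Y$ is doubly rigged to begin with.) Now Lemma~\ref{after}, applied to the $\sigma$-TRO $M$ and the algebra $\cl A_0$, yields $\tilde Y\cong\overline{[\cl A_0\,M^{\star}]}$ and $\mathbb K(Y)\cong\overline{[M\,\cl A_0\,M^{\star}]}$; one then observes $\overline{[M\,\cl A_0\,M^{\star}]}=\overline{[M\,a(\cl A)\,M^{\star}]}=\cl B$, so $\cl B$ really is $\mathbb K(Y)$ up to completely isometric isomorphism.

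With these identifications, parts $(i)$ and $(ii)$ become the balanced Haagerup tensor computations $Y\otimes_{\cl A_0}^h\tilde Y\cong\cl B$ and $\tilde Y\otimes_{\cl B}^h Y\cong\cl A_0$. I would run these exactly as in the proof of Theorem~\ref{main}: writing $D_1=\overline{[M^{\star}M]}$ and $D_2=\overline{[M\,M^{\star}]}$, use Lemma~\ref{a lot} in the form $\overline{[M\,\cl A_0]}\cong M\otimes_{D_1}^h\cl A_0$ and $\overline{[\cl A_0\,M^{\star}]}\cong\cl A_0\otimes_{D_1}^h M^{\star}$, then collapse the balanced tensor products over $\cl A_0$ and $\cl B$ using associativity of $\otimes^h$ and the module-action identities, arriving at $\overline{[M\,\cl A_0\,M^{\star}]}=\cl B$ and $\overline{[M^{\star}M\,\cl A_0]}=\cl A_0$ respectively. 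Finally $(iii)$: $\cl A_0\sim_{\sigma TRO}\cl B$ from Lemma~\ref{useful} gives $\cl A_0\sim_{\sigma\Delta}\cl B$; the computation $\overline{[M^{\star}\,\cl B\,M^{\star\star}?]}$—more precisely $\overline{[M^{\star}M\,a(\cl A)]}\sim_{\sigma TRO}\overline{[M\,a(\cl A)]}$ via $M_1=M,\ M_2=\overline{[M\,M^{\star}]}$ as in Theorem~\ref{restr-doubly}—gives $\cl A_0\sim_{\sigma TRO}Y_0$, hence $\cl A_0\sim_{\sigma\Delta}Y$; and $Y\sim_{\sigma\Delta}\tilde Y$ follows since $Y_0=\overline{[M\,\cl A_0]}$ and $\overline{[\cl A_0\,M^{\star}]}=\tilde Y$ are $\sigma$-TRO equivalent through the same pair of $\sigma$-TROs. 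The main obstacle I anticipate is bookkeeping: verifying that $Y$ (not merely $Y_0$) inherits the doubly $\sigma\Delta$-$\cl A_0$-rigged structure through $\phi$, and that $\cl A_0$ acts on $Y$ compatibly with its $\cl A$-action so that $\tilde Y$ computed over $\cl A_0$ really coincides with the $\tilde Y$ in the statement — this requires carefully tracking the module actions rather than any deep new idea, and it is the one place where the non-selfadjoint setting forces more care than the $C^{\star}$-algebra case.
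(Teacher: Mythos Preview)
Your proposal is correct and follows essentially the same route as the paper: pass to the concrete realization $Y_0=\overline{[M\,a(\cl A)]}$, set $\cl B=\overline{[M\,a(\cl A)\,M^{\star}]}$ and $\cl A_0=\overline{[a(\cl A)\,M^{\star}M]}$ (which equals your $\overline{[M^{\star}M\,a(\cl A)\,M^{\star}M]}$ under the doubly hypothesis), identify $\tilde Y$ via Lemma~\ref{after}, and then run the balanced Haagerup tensor computations exactly as in Theorem~\ref{main}; part~$(iii)$ is handled by explicit $\sigma$-TRO pairs and Theorem~\ref{TRO}.

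The one place where the paper is cleaner is the invocation of Lemma~\ref{after}: rather than passing to $\cl A_0$ and then worrying about whether $\tilde Y$ computed over $\cl A_0$ agrees with $\tilde Y$ computed over $\cl A$, the paper observes that the doubly condition $\overline{[M^{\star}M\,a(\cl A)]}=\overline{[a(\cl A)\,M^{\star}M]}\subseteq a(\cl A)$ already supplies \emph{both} inclusions $M^{\star}M\,a(\cl A)\subseteq a(\cl A)$ and $a(\cl A)\,M^{\star}M\subseteq a(\cl A)$ required by Lemma~\ref{after}, so that lemma applies directly to $a(\cl A)$ and yields $\tilde Y\cong\overline{[a(\cl A)\,M^{\star}]}$ in one stroke. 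This dissolves the compatibility concern you flag at the end; the detour through viewing $Y$ as an $\cl A_0$-module is unnecessary.
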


\begin{proof}
It suffices to prove the above assertions for the case of a doubly $\sigma$-TRO-$\cl A$-module $Y=\overline{[M\,\cl A]}$ where $\cl A\subseteq \mathbb{B}(H)\,,M\subseteq \mathbb{B}(H,K)$ is a $\sigma$-TRO such that $M^{\star}\,M\,\cl A\subseteq \cl A$ and $$\overline{[M^{\star}\,M\,\cl A]}=\overline{[\cl A\,M^{\star}\,M]}\,\,(1).$$ We set $\cl A_{0}=\overline{[\cl A\,M^{\star}\,M]}\subseteq \cl A$.
Clearly $\cl A_{0}$ is an approximately unital  operator algebra.
\\i) By Lemma \ref{after}, $\tilde{Y}\cong \overline{[\cl A\,M^{\star}]}$, and so
$$\overline{[\cl A_{0}\,M^{\star}]}=\overline{[\cl A\,M^{\star}\,M\,M^{\star}]}=\overline{[\cl A\,M^{\star}]}=\tilde{Y}$$
and on the other hand 
$$\overline{[M\,\cl A_{0}]}=\overline{[M\,\cl A\,M^{\star}\,M]}\stackrel{(1)}{=}\overline{[M\,M^{\star}\,M\,\cl A]}=\overline{[M\,\cl A]}=Y.$$  Using Lemma \ref{a lot} and making the same calculations as in the proof of Theorem \ref{main} we have that $Y\otimes_{\cl A_{0}}^h \tilde{Y}\cong \overline{[M\,\cl A\,M^{\star}]}$.
If we define $\cl B=\overline{[M\,\cl A\,M^{\star}]}$, then $\cl B$ is an approximately unital operator algebra such that $\cl B\cong Y\otimes_{\cl A_{0}}^h \tilde{Y}$.\\$ii)$ It is true that $\tilde{Y}\cong \overline{[\cl A\,M^{\star}]}=\overline{[M^{\star}\,\cl B]}$, so if $D_1=\overline{[M^{\star}\,M]}$ and $D_2=\overline{[M\,M^{\star}]}$, it follows that 
\begin{align*}
    \tilde{Y}\otimes_{\cl B}^h Y&=\overline{[M^{\star}\,\cl B]}\otimes_{\cl B}^h \overline{[M\,\cl A]}\\&\cong \left(M^{\star}\otimes_{D_2}^h \cl B\right)\otimes_{\cl B}^h Y\\&\cong M^{\star}\otimes_{D_2}^h \left[\cl B\otimes_{\cl B}^h \left(M\otimes_{D_1}^h \cl A\right)\right]\\&\cong M^{\star}\otimes_{D_2}^h M\otimes_{D_1}^h \cl A\\&\cong \overline{[M^{\star}\,M]}\otimes_{D_1}^h \cl A\\&\cong \overline{[M^{\star}\,M\,\cl A]}\\&=\overline{[\cl A\,M^{\star}\,M]}=\cl A_{0}.
\end{align*}\\
$iii)$ Consider the $\sigma$-TROs $M_1=M^{\star}\subseteq \mathbb{B}(K,H)$ and $M_2=M\subseteq \mathbb{B}(H,K)$. Then $$\overline{[M_2^{\star}\,Y\,M_1]}=\overline{[M^{\star}\,M\,\cl A\,M^{\star}]}=\overline{[\cl A\,M^{\star}\,M\,M^{\star}]}=\overline{[\cl A\,M^{\star}]}=\tilde{Y}$$ and $$\overline{[M_2\,\tilde{Y}\,M_1^{\star}]}=\overline{[M\,\cl A\,M^{\star}\,M]}=\overline{[M\,M^{\star}\,M\,\cl A]}=\overline{[M\,\cl A]}=Y$$
so $Y\sim_{\sigma TRO}\tilde{Y}$.
By Theorem \ref{TRO}, we also have that $\cl B\sim_{\sigma TRO}Y$ and $\cl B\sim_{\sigma \Delta} \cl A_{0}$.

\end{proof}

\begin{definition}
\label{def M}

Let $\cl A\,,\cl B$ be approximately unital operator algebras, $\,E$ be a right $\cl B$-rigged module and $F$ be a right $\cl A$-rigged module.
We call $E$ and $F$ Morita equivalent if there exists a right $\cl A$-rigged module $Y$ such that $\cl A\cong \tilde{Y}\otimes_{\cl B}^h Y\,\,,\cl B\cong Y\otimes_{\cl A}^h \tilde{Y}$ and also $F\cong E\otimes_{\cl B}^h Y.$ In this case we write $E\sim_{M} F$.

\end{definition}

\begin{remark}
\label{compact}

If $\cl A\,,\cl B\,,E$ and $F$ are as above (Definition \ref{def M}), then by \cite[Theorem 6.1]{Ble-Gen}, $$\mathbb{K}(F)\cong \mathbb{K}\left(E\otimes_{\cl B}^h Y\right)\cong \mathbb{K}(E).$$

\end{remark}

\begin{definition}
\label{def sM}

Let $\cl A\,,\cl B$ be approximately unital operator algebras, $\,E$ be a right $\cl B$-rigged module and $F$ be a right $\cl A$-rigged module.
We call $E$ and $F$ $\sigma$-Morita equivalent if there exists a doubly $\sigma \Delta$-$\cl A$-rigged module $Y$ such that $\cl A\cong \tilde{Y}\otimes_{\cl B}^h Y\,,\cl B\cong Y\otimes_{\cl A}^h \tilde{Y}$ and also $F\cong E\otimes_{\cl B}^h Y.$ In this case we write $E\sim_{\sigma M} F.$

\end{definition}

\begin{remark} Other notions of Morita equivalence for the subcategory of Hilbert modules exist in \cite{JM, Skeide}.
\end{remark}

\begin{proposition}

If $E\sim_{\sigma M}F$, then $\mathbb{K}(E)\cong \mathbb{K}(F).$

\end{proposition}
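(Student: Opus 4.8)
The statement is immediate from what's already available: $\sigma$-Morita equivalence is a special case of Morita equivalence (a doubly $\sigma\Delta$-$\cl A$-rigged module is in particular a right $\cl A$-rigged module), so Remark \ref{compact} applies verbatim.

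\begin{proof}
Suppose $E\sim_{\sigma M}F$. By Definition \ref{def sM} there exists a doubly $\sigma\Delta$-$\cl A$-rigged module $Y$ with $\cl A\cong \tilde Y\otimes_{\cl B}^h Y$, $\cl B\cong Y\otimes_{\cl A}^h\tilde Y$, and $F\cong E\otimes_{\cl B}^h Y$. Since every doubly $\sigma\Delta$-$\cl A$-rigged module is in particular a right $\cl A$-rigged module (it is a $\sigma\Delta$-$\cl A$-rigged module by the remark following Definition \ref{doubly}, hence a right rigged module by Proposition \ref{basic-rigged}), the module $Y$ witnesses $E\sim_M F$ in the sense of Definition \ref{def M}. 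Therefore Remark \ref{compact} gives
$$\mathbb{K}(F)\cong \mathbb{K}\left(E\otimes_{\cl B}^h Y\right)\cong \mathbb{K}(E),$$
which is the claim.
\end{proof}

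The only point worth checking is that the chain of implications ``doubly $\sigma\Delta$-$\cl A$-rigged $\Rightarrow$ $\sigma\Delta$-$\cl A$-rigged $\Rightarrow$ right $\cl A$-rigged'' is legitimate in the abstract setting; the first step is the remark after Definition \ref{doubly} (reducing to the concrete picture $Y=\overline{[M\,\cl A]}$ with the extra condition $\overline{[M^\star M\cl A]}=\overline{[\cl A M^\star M]}$, which is stronger than what Definition \ref{abstract-rigged} requires), and the second is Proposition \ref{basic-rigged}. I would expect no obstacle here; the proposition is essentially a specialization of Remark \ref{compact} recorded for convenience before developing the stable-isomorphism refinement.
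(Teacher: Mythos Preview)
Your proof is correct and follows exactly the same approach as the paper: observe that $E\sim_{\sigma M}F$ trivially implies $E\sim_M F$ (since a doubly $\sigma\Delta$-$\cl A$-rigged module is in particular a right $\cl A$-rigged module), and then invoke Remark \ref{compact}. The paper's own proof is a one-line version of what you wrote.
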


\begin{proof}
It is obvious that if $E\sim_{\sigma M} F$, then $E\sim_{M}F$ and thus  that if $E\sim_{\sigma M}F$, then, by Remark \ref{compact}, $\mathbb{K}(E)\cong \mathbb{K}(F).$
\end{proof}

\begin{lemma}
\label{useful 2}

Let $M$ be a $\sigma$-TRO, $D_1=\overline{[M\,M^{\star}]}\,\,,D_2=\overline{[M^{\star}\,M]}$, $\,E$ be a right $D_1$-module and $F$ be a right $D_2$-module such that $F\cong E\otimes_{D_1}^h M$.
Then $E\sim_{\sigma \Delta} F.$

\end{lemma}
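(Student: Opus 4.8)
The plan is to represent everything concretely and use the fact that $E$, being a right $D_1$-module with $D_1 = \overline{[M M^\star]}$, can be "sandwiched" by the $\sigma$-TRO $M$ so as to produce $F$ directly as a $\sigma$-TRO equivalent space. First I would fix a faithful representation in which $M \subseteq \mathbb{B}(H,K)$, so that $D_1 = \overline{[M M^\star]} \subseteq \mathbb{B}(K)$ and $D_2 = \overline{[M^\star M]} \subseteq \mathbb{B}(H)$ are $C^\star$-algebras with $\sigma$-units (they have $\sigma$-units precisely because $M$ is a $\sigma$-TRO). Since $E$ is a right $D_1$-module and $F \cong E \otimes_{D_1}^h M$, I would want a concrete model of $E$: using the rigged/Hilbert-module structure one realizes $E$ completely isometrically as an operator space $\tilde E \subseteq \mathbb{B}(L, K)$ for some Hilbert space $L$, with $\tilde E\, D_1 \subseteq \tilde E$ and in fact $\overline{[\tilde E^\star \tilde E]}$ a $C^\star$-subalgebra, $\overline{[\tilde E D_1]} = \tilde E$. (Here I would invoke the representation theory of $\sigma\Delta$-rigged modules, i.e. Definition \ref{basic-def} and Theorem \ref{orthog}, applied to $E$ over $D_1$.)

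Next I would identify $F$ concretely. Using the balanced Haagerup tensor product identification $\tilde E \otimes_{D_1}^h M \cong \overline{[\tilde E\, M]} \subseteq \mathbb{B}(H, K)$ — which is exactly the kind of computation carried out in the proof of Theorem \ref{main} and Lemma \ref{after}, relying on Lemma \ref{a lot} and the factorization of an operator-space module over a $C^\star$-algebra with a $\sigma$-unit — I obtain $F \cong \overline{[\tilde E\, M]}$. Now set $X := \overline{[\tilde E\, M]}$; I must check $X$ is a right $D_2$-module, which follows since $X D_2 = \overline{[\tilde E M M^\star M]} = \overline{[\tilde E M]} = X$ using $M M^\star M = M$ (as $M$ is a TRO).

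With both $E \cong \tilde E \subseteq \mathbb{B}(L,K)$ and $F \cong X = \overline{[\tilde E M]} \subseteq \mathbb{B}(L,H)$ in hand, the final step is to exhibit the $\sigma$-TRO equivalence directly from Definition \ref{eq}. The natural candidates are the $\sigma$-TROs $M_1 := M \subseteq \mathbb{B}(H,K)$ sitting on the "column side" and $M_2 := I$ (or, to be safe about which Hilbert spaces appear, $M_2$ the trivial $\sigma$-TRO on the ambient space of $L$) — more precisely I would take $M_2 \subseteq \mathbb{B}(L)$ a suitable $\sigma$-TRO making the source/target Hilbert spaces match, but the only nontrivial TRO needed is $M$ on the right. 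Then one computes
\begin{align*}
\overline{[M_2^\star\, \tilde E\, M_1]} &= \overline{[\tilde E\, M]} = X \cong F,\\
\overline{[M_2\, X\, M_1^\star]} &= \overline{[\tilde E\, M\, M^\star]} = \overline{[\tilde E\, D_1]} = \tilde E \cong E,
\end{align*}
where the last equality uses $\overline{[M M^\star]} = D_1$ and $\overline{[\tilde E D_1]} = \tilde E$. By Definition \ref{eq} this gives $E \sim_{\sigma TRO} F$, hence (passing through the completely isometric identifications) $E \sim_{\sigma \Delta} F$.

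The main obstacle, I expect, is the bookkeeping needed to realize $E$ concretely as an operator space on which $M$ acts on the right in a way compatible with the abstract $D_1$-module structure and with the Haagerup-tensor identification of $F$ — in other words, making precise the completely isometric identification $E \otimes_{D_1}^h M \cong \overline{[\tilde E\, M]}$ and ensuring the Hilbert spaces line up so that $M_1 = M$, $M_2$ genuinely sit inside $\mathbb{B}(H,K)$ and $\mathbb{B}(\cdot)$ respectively as required by Definition \ref{eq}. Once that identification is set up, the TRO-equivalence equalities are the same one-line $M M^\star M = M$ manipulations used repeatedly in Theorems \ref{main}, \ref{TRO}, \ref{restr-doubly}, so they should go through routinely.
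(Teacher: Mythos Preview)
Your approach differs substantially from the paper's and has a genuine gap at the step you yourself flag as ``bookkeeping.'' You want to realize $E$ concretely as $\tilde E \subseteq \mathbb{B}(K,L)$ with the $D_1$-action given by literal right multiplication by $D_1 \subseteq \mathbb{B}(K)$, so that the operator product $\tilde E\, M$ makes sense and linearizes $E\otimes_{D_1}^h M$. But the hypothesis only says that $E$ is a right $D_1$-operator module; there is no assumption that $E$ is a $\sigma\Delta$-$D_1$-rigged module, so Theorem~\ref{orthog} and Definition~\ref{basic-def} simply do not apply. The Christensen--Effros--Sinclair representation theorem does give a completely isometric embedding $E\hookrightarrow \mathbb{B}(K_1,L)$ together with a $*$-representation $\pi:D_1\to\mathbb{B}(K_1)$ implementing the action, but there is no reason that $K_1=K$ or $\pi=\mathrm{id}$. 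Consequently $\tilde E\, M$ is not defined as an operator product, and to rescue the argument one would have to transport $M$ through $\pi$ (Rieffel induction on the imprimitivity bimodule $M$) to a $\sigma$-TRO $M'\subseteq \mathbb{B}(H',K_1)$, verify that $M'$ is still a $\sigma$-TRO, and then justify the linearization $E\otimes_{D_1}^h M\cong \overline{[\tilde E\,M']}$. None of this is routine, and it is precisely the content that is missing.

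The paper sidesteps all representation-theoretic issues and argues abstractly. It first notes that $F\otimes_{D_2}^h M^\star\cong E\otimes_{D_1}^h(M\otimes_{D_2}^h M^\star)\cong E\otimes_{D_1}^h D_1\cong E$, and fixes a complete isometry $a:F\otimes_{D_2}^h M^\star\to E$. Using a $\sigma$-TRO sequence $(m_i)$ with $\bigl\|\sum_i m_i^\star m_i\bigr\|\le 1$, it builds completely contractive maps $\Phi:F\to R_\infty(E)$, $\Phi(f)=(a(f\otimes_{D_2}m_i^\star))_i$, and $\Psi:R_\infty(E)\to F$, $\Psi((e_i))=\sum_i e_i\otimes_{D_1}m_i$, and checks directly that $\Psi\circ\Phi=Id_F$. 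Thus $F$ is completely isometric to a complemented summand of $R_\infty(E)$, and an Eilenberg swindle gives $R_\infty(E)\cong F\oplus_r R_\infty(E)$, hence by symmetry $R_\infty(E)\cong R_\infty(F)$ and $K_\infty(E)\cong K_\infty(F)$. The conclusion $E\sim_{\sigma\Delta}F$ then follows from the equivalence of stable isomorphism and $\sigma\Delta$-equivalence \cite[Theorem 3.8]{Ele-Pap}.
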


\begin{proof}

By \cite[Theorem 3.8]{Ele-Pap}, it suffices to prove that $E$ and $F$ are stably isomorphic.
We may assume that $F=E\otimes_{D_1}^h M.$ Hence, 
\begin{align*}
    F\otimes_{D_2}^h M^{\star}&=\left(E\otimes_{D_1}^h M\right)\otimes_{D_2}^h M^{\star}\\&\cong E\otimes_{D_1}^h \left(M\otimes_{D_2}^h M^{\star}\right)\\&\cong E\otimes_{D_1}^h D_1\\&\cong E.
\end{align*}
Thus, we can also assume that there exists a complete onto isometry 
$$a:F\otimes_{D_2}^h M^{\star}\to E$$ such that $a((e\otimes_{D_1}m)\otimes_{D_2}n^{\star})=e\,m\,n^{\star}\,,\forall\,e\in E\,,m\,,n\in M\,(\ast).$ There exists a sequence $\left\{m_i\in M: i\in\mathbb{N}\right\}$ such that $$\nor{\sum_{i=1}^n m_i^{\star}\,m_i}\leq 1\,,\forall\,n\in\mathbb{N}$$ and also $$\sum_{i=1}^{\infty}m\,m_i^{\star}\,m_i=m\,,\forall\,m\in M.$$ We observe that for all $e\in E$ and $m\in M$ we have that $$ \sum_{i=1}^{\infty}a((e\otimes_{D_1}m)\otimes_{D_2}m_i^{\star})\otimes_{D_1}m_i\stackrel{(\ast)}{=}\sum_{i=1}^{\infty}e\,m\,m_i^{\star}\otimes_{D_1}m_i \stackrel{m\,m_i^{\star}\in D_1}{=}\sum_{i=1}^{\infty}e\otimes_{D_1}m\,m_i^{\star}\,m_i=e\otimes_{D_1}m.$$
Thus, $$\sum_{i=1}^{\infty}a(f\otimes_{D_2}m_i^{\star})\otimes_{D_1}m_i=f\,,\forall\,f\in F\,\,(\ast \ast).$$
We define the completely contractive maps $$\Phi:F\to R_{\infty}(E)\,,\Phi(f)=(a(f\otimes_{D_2}m_i^{\star}))_{i\in\mathbb{N}}$$ $$\Psi:R_{\infty}(E)\to F\,,\Psi((e_i)_{i\in\mathbb{N}})=\sum_{i=1}^{\infty}e_i\otimes_{D_1}m_i.$$
Using $(\ast \ast)$, we have that $$(\Psi\circ \Phi)(f)=\sum_{i=1}^{\infty}a(f\otimes_{D_2}m_i^{\star})\otimes_{D_1}m_i=f\,,\forall\,f\in F.$$
So, $\Phi$ is a complete isometry and $P=\Phi\circ \Psi: R_{\infty}(E)\to R_{\infty}(E)$ is a projection and we have that $\Phi(F)=\rm{Ran}(P).$ Now we employ the usual arguments, see for example the proof of Corollary 8.2.6 of \cite{BleLeM04}: 
$$R_{\infty}(E)\cong \rm{Ran}(P)\oplus_{r}\rm{Ran}(I-P)\cong \Phi(F)\oplus_{r}\rm{Ran}(I-P)\cong F\oplus_{r}\rm{Ran}(I-P)$$ where $I=I_{R_{\infty}(E)}.$ Thus, \begin{align*}
    R_{\infty}(E)&\cong R_{\infty}(R_{\infty}(E))\\&\cong (F\oplus_{r}\rm{Ran}(I-P))\oplus_{r}(F\oplus_{r}\rm{Ran}(I-P))\oplus_{r}...\\&\cong F\oplus_{r}(\rm{Ran}(I-P)\oplus_{r}F)\oplus_{r}(\rm{Ran}(I-P)\oplus_{r}F)\oplus_{r}...\\&\cong F\oplus_{r}R_{\infty}(E).
\end{align*}
Therefore, $R_{\infty}(E)\cong R_{\infty}(R_{\infty}(E))\cong R_{\infty}(F)\oplus_{r}R_{\infty}(E)$.
By symmetry, $R_{\infty}(F)\cong R_{\infty}(E)\oplus_{r}R_{\infty}(F)$, so $R_{\infty}(E)\cong R_{\infty}(F)$ which implies that $K_{\infty}(E)\cong K_{\infty}(F).$

\end{proof}

\begin{theorem}

Let $\cl A\,,\cl B$ be approximately unital operator algebras, $E$ be a right $\cl B$-rigged module and $F$ be a right $\cl A$-rigged module such that $E\sim_{\sigma M} F$.
Then $E\sim_{\sigma \Delta} F.$

\end{theorem}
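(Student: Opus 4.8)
The plan is to strip the module $Y$ out of the relation $F\cong E\otimes_{\cl B}^h Y$ and then invoke Lemma~\ref{useful 2}. By Definition~\ref{def sM} fix a doubly $\sigma\Delta$-$\cl A$-rigged module $Y$ with $\cl A\cong\tilde Y\otimes_{\cl B}^h Y$, $\cl B\cong Y\otimes_{\cl A}^h\tilde Y$ and $F\cong E\otimes_{\cl B}^h Y$. Using Definition~\ref{abstract-doubly} I may assume $Y=\overline{[M\,a(\cl A)]}\subseteq\mathbb{B}(H,K)$ for a completely isometric homomorphism $a:\cl A\to\mathbb{B}(H)$ and a $\sigma$-TRO $M\subseteq\mathbb{B}(H,K)$ with $M^{\star}M\,a(\cl A)\subseteq a(\cl A)$ and $\overline{[M^{\star}M\,a(\cl A)]}=\overline{[a(\cl A)\,M^{\star}M]}$. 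Write $C=\overline{[M^{\star}M]}$ and $D=\overline{[M\,M^{\star}]}$, the two $C^{\star}$-algebras attached to $M$ (each with a $\sigma$-unit). By Lemma~\ref{after} we have $\tilde Y\cong\overline{[a(\cl A)\,M^{\star}]}$ and $Y\otimes_{\cl A}^h\tilde Y\cong\overline{[M\,a(\cl A)\,M^{\star}]}$; since $\cl B\cong Y\otimes_{\cl A}^h\tilde Y$, I may take $\cl B=\overline{[M\,a(\cl A)\,M^{\star}]}\subseteq\mathbb{B}(K)$, acting on $Y$ by left multiplication.

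The key step is the identification $Y\cong\cl B\otimes_{D}^h M$. First, the doubly condition gives $\overline{[\cl B\,M]}=\overline{[M\,a(\cl A)\,M^{\star}M]}=\overline{[M\,M^{\star}M\,a(\cl A)]}=\overline{[M\,a(\cl A)]}=Y$; moreover $\cl B\,D\subseteq\cl B$ and $D\,M\subseteq M$, so $\cl B$ is a nondegenerate right operator $D$-module and $M$ a nondegenerate left operator $D$-module. Running the same tensor computations as in the proof of Theorem~\ref{main} (with $\cl B\cong M\otimes_{C}^h a(\cl A)\otimes_{C}^h M^{\star}$ and $M^{\star}\otimes_{D}^h M\cong\overline{[M^{\star}M]}=C$) one obtains
\begin{equation*}
\cl B\otimes_{D}^h M\;\cong\;M\otimes_{C}^h a(\cl A)\otimes_{C}^h M^{\star}\otimes_{D}^h M\;\cong\;M\otimes_{C}^h a(\cl A)\otimes_{C}^h C\;\cong\;M\otimes_{C}^h a(\cl A)\;\cong\;Y
\end{equation*}
as left $\cl B$-modules.

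Now I would collapse the tensor product. Since $E$ is a right $\cl B$-rigged module it is $\cl B$-essential, so $E\otimes_{\cl B}^h\cl B\cong E$; combining this with the previous display and associativity of the Haagerup tensor product,
\begin{equation*}
F\;\cong\;E\otimes_{\cl B}^h Y\;\cong\;E\otimes_{\cl B}^h\bigl(\cl B\otimes_{D}^h M\bigr)\;\cong\;\bigl(E\otimes_{\cl B}^h\cl B\bigr)\otimes_{D}^h M\;\cong\;E\otimes_{D}^h M ,
\end{equation*}
where $E$ carries the right $D$-module structure transported from the right $D$-action on $\cl B$, and $E\otimes_{D}^h M$ the right $C$-module structure coming from $M\,C\subseteq M$. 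Thus $E$ is a right $D$-module, $E\otimes_{D}^h M$ is a right $C$-module with $D=\overline{[M\,M^{\star}]}$, $C=\overline{[M^{\star}M]}$, and $F\cong E\otimes_{D}^h M$; Lemma~\ref{useful 2} then yields $E\sim_{\sigma\Delta}E\otimes_{D}^h M\cong F$, and since $\sim_{\sigma\Delta}$ is an equivalence relation on operator spaces we conclude $E\sim_{\sigma\Delta}F$.

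The only points requiring genuine care — hence the main obstacle — are the identifications of the type $\overline{[X\,W]}\cong X\otimes_{D}^h W$ over the $C^{\star}$-algebra $D$ (used for $Y\cong\cl B\otimes_{D}^h M$) and the compatibility of the abstract left $\cl B$-action on $Y$ with left multiplication by $\overline{[M\,a(\cl A)\,M^{\star}]}$; both, however, are exactly of the kind already handled in the proofs of Lemma~\ref{after} and Theorem~\ref{main} (and in \cite{BMP00}), so no genuinely new idea is needed beyond bookkeeping with the doubly-$\sigma$-TRO relations.
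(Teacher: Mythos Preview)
Your proposal is correct and follows essentially the same route as the paper: identify $\cl B$ with $\overline{[M\,a(\cl A)\,M^{\star}]}$, use the doubly condition to get $Y=\overline{[\cl B\,M]}\cong \cl B\otimes_{D}^h M$, collapse $F\cong E\otimes_{\cl B}^h(\cl B\otimes_{D}^h M)\cong E\otimes_{D}^h M$, and invoke Lemma~\ref{useful 2}. The only difference is cosmetic: the paper obtains $\overline{[\cl B\,M]}\cong \cl B\otimes_{D}^h M$ in one step (the standard $\overline{[XW]}\cong X\otimes_{D}^h W$ identification over a $C^{\star}$-algebra), whereas you route it through $\cl B\cong M\otimes_{C}^h a(\cl A)\otimes_{C}^h M^{\star}$ and $M^{\star}\otimes_{D}^h M\cong C$, which is longer but equivalent.
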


\begin{proof}

Let $a:\cl A\to \mathbb{B}(H)$ be a completely-isometric representation of $\cl A$ on $H$ and $M\subseteq \mathbb{B}(H,K)$ be a $\sigma$-TRO such that $M^{\star}\,M\,a(\cl A)\subseteq a(\cl A)$ and also $\overline{[M^{\star}\,M\,a(\cl A)]}=\overline{[a(\cl A)\,M^{\star}\,M]}.$ Consider also the doubly $\sigma \Delta$-$\cl A$-rigged module $Y=\overline{[M\,a(\cl A)]}$ such that $a(\cl A)\cong \tilde{Y}\otimes_{\cl B}^h Y\,\,,\cl B\cong Y\otimes_{\cl A}^h \tilde{Y}\cong \overline{[M\,a(\cl A)\,M^{\star}]}$ and also $F\cong E\otimes_{\cl B}^h Y.$
We define $D_1=\overline{[M\,M^{\star}]}$ and we have that $\cl B\,M\,M^{\star}\subseteq \cl B$.
So $$E=\overline{[E\,\cl B]}\supseteq \overline{[E\,\cl B\,M\,M^{\star}]}=\overline{[E\,M\,M^{\star}]}$$ which means that $E$ is a right $D_1$-module.
Therefore, since $Y=\overline{[M\,a(\cl A)]}=\overline{[\cl B\,M]}$, it holds that 
$$F\cong E\otimes_{\cl B}^h Y=E\otimes_{\cl B}^h \overline{[\cl B\,M]}\cong E\otimes_{\cl B}^h \left(\cl B\otimes_{D_1}^h M\right)\cong \left(E\otimes_{\cl B}^h \cl B\right)\otimes_{D_1}^h M\cong E\otimes_{D_1}^h M.$$

Observe that if $D_2=\overline{[M^{\star}\,M]}$, then $F=\overline{[F\,\cl A]}\supseteq \overline{[F\,\cl A\,M^{\star}\,M]}=\overline{[F\,M^{\star}\,M]}$ which means that $F$ is a right $D_2$-module. Now by Lemma \ref{useful 2}, $E\sim_{\sigma \Delta}F.$

\end{proof}

\noindent

\end{document}